\newtheorem{thm}{Theorem}
\newtheorem{lem}{Lemma}
\newtheorem{rem}{Remark}
\newtheorem{cor}{Corollary}
\newtheorem{mdef}{Definition}
\newtheorem{prop}{Proposition}
\def\eps{\varepsilon}
\def\Id{{\rm Id}}
\def\op{{\rm op}}
\def\opaw{{\rm op^{AW}}}
\def\opwe{{\rm op^{We}}}
\def\tr{{\rm tr}}
\def\T{{\rm T}}
\def\C{\mathbb C}
\def\N{\mathbb N}
\def\P{\mathbb P}
\def\R{\mathbb R}
\def\H{{\mathcal H}}
\def\W{{\mathcal W}}
\title{Propagation of Quantum Expectations\\ with Husimi Functions}
\author{Johannes Keller
\thanks{Zentrum Mathematik, Technische Universit\"at M\"unchen, Boltzmannstra\ss e 3, 
85748 Garching bei M\"unchen, Germany ({\tt keller@ma.tum.de}) }
\and Caroline Lasser \thanks{Zentrum Mathematik, Technische Universit\"at M\"unchen, Boltzmannstra\ss e 3, 
85748 Garching bei M\"unchen, Germany ({\tt classer@ma.tum.de}) }.}
\date{\today}
\begin{document}
\maketitle

\begin{abstract}
We analyse the dynamics of expectation values of quantum 
observables for the time-dependent semiclassical Schr\"odinger equation.
To benefit from the positivity of Husimi functions, we switch
 between observables obtained from Weyl  and Anti-Wick quantization. 
We develop and prove a second order Egorov type propagation 
theorem with Husimi functions by establishing transition and commutator rules for  
Weyl and Anti-Wick operators. We provide a discretized version of our
 theorem and present numerical experiments for Schr\"odinger equations 
in dimensions two and six that validate our results.
\end{abstract}

\begin{keywords}time-dependent Schr\"odinger equation, expectation
values, Husimi functions\end{keywords}

\begin{AMS}81S30, 81Q20, 81-08, 65D30, 65Z05\end{AMS}

\section{Introduction}
Our investigation is devoted to the time-dependent Schr\"odin\-ger equation
\begin{equation}
\label{eq:schro}
i\eps\partial_t\psi =-\tfrac{\eps^2}{2}\Delta\psi + V\psi,\qquad \psi(0)=\psi_0
\end{equation}
for general square integrable initial data $\psi_0:\R^d\to\C$ with $\|\psi_0\|_{L^2}=1$. We assume that the potential $V:\R^d\to\R$ is 
smooth and satisfies suitable growth conditions at infinity, such that the semi-classical Schr\"odinger operator 
$$
H=-\tfrac{\eps^2}{2}\Delta+V
$$ 
is essentially selfadjoint. Such potentials are met in molecular quantum dynamics for the effective description of nuclear motion after performing 
the time-dependent Born-Oppenheimer approximation \cite[Theorem~1]{ST01}. In this application, the parameter $\eps$ is the square root of the ratio of electronic 
versus average nuclear mass. Hence, $\eps>0$ is small and typically ranges between $\nicefrac{1}{1000}$ and $\nicefrac{1}{10}$. 
This implies that the solution of~(\ref{eq:schro}) is highly oscillatory in space and time with frequencies of the order~$\nicefrac1\eps$. However, 
the time evolution of expectation values
$$
t\mapsto\langle\psi_t,A\psi_t\rangle_{L^2} = \int_{\R^d} \overline{\psi_t(x)} A\psi_t(x) dx
$$
is less oscillatory, and the Egorov theorem \cite[Theorem~1.2]{BR02} suggests a semiclassical approximation using the Hamiltonian flow 
$\Phi^t:\R^{2d}\to\R^{2d}$ of the classical equations of motion $\dot q=p$, $\dot p=-\nabla V(q)$.

For this approximation we adopt the phase space point of view and consider operators $A=\opwe(a)$, which are obtained from functions $a:\R^{2d}\to\R$ by the Weyl quantization. Consequently, the Schr\"odinger operator $H=\op_\eps(h)$ is seen as the Weyl quantization of the Hamilton function 
$$
h:\R^{2d}\to\R,\qquad h(q,p)=\tfrac12|p|^2+V(q). 
$$
Expectation values of Weyl quantized operators can be written as phase space integrals
$$
\langle\psi_t,\opwe(a)\psi_t\rangle_{L^2} = \int_{\R^{2d}} a(z) \W^\eps(\psi_t)(z) dz,
$$
where $\W^\eps(\psi_t):\R^{2d}\to\R$ is the Wigner function of the square integrable function~$\psi_t:\R^d\to\C$. The Egorov theorem can be expressed in terms of Wigner functions as 
\begin{equation}
\label{eq:eg}
\langle\psi_t,\opwe(a)\psi_t\rangle_{L^2} = \int_{\R^{2d}} (a\circ\Phi^t)(z)\, \W^\eps(\psi_0)(z)\, dz + O(\eps^2). 
\end{equation}

In general, Wigner functions attain negative values. However, a proper smoothing by a Gaussian function $G_{\eps/2}$ with mean zero and covariance $\frac\eps2\,{\rm Id}$ results in a nonnegative function
$$
\H^\eps(\psi_t) = \W^\eps(\psi_t)* G_{\eps/2},
$$
the so-called Husimi function \cite[Remark~1.4]{GMMP97}. Hence, contrary to the Wigner function, the Husimi function can be viewed as a probability distribution on phase space, which entails the straightforward application of Monte Carlo type methods for sampling, especially in high dimensions. However, the gain in positivity is accompanied by a loss in accuracy with respect to time propagation, since replacing the Wigner function~$\W^\eps(\psi_0)$ by the Husimi function~$\H^\eps(\psi_0)$ in the Egorov theorem (\ref{eq:eg}) deteriorates the approximation error from second to first order in $\eps$. This motivates the question, whether an $\eps$-corrected flow can compensate this accuracy loss when propagating Husimi functions. 

Our main result answers the question positively. Its basic constituents are the $\eps$-dependent modification 
$$
a_\eps=a-\tfrac\eps4\Delta a
$$ 
of a phase space function $a:\R^{2d}\to\R$ by subtracting its phase space Laplacian, the flow $\Phi^t_\eps:\R^{2d}\to\R^{2d}$ associated with the Hamilton function $h_\eps=h-\tfrac\eps4\Delta:\R^{2d}\to\R$,
$$
h_\eps(q,p) = \tfrac{1}{2}|p|^2 + V(q) -\tfrac{\eps}{4}\left(d+\Delta V(q)\right),
$$
together with the solutions 
$$
\Lambda^t_\eps\in\R^{2d\times2d},\qquad \Gamma^t_\eps\in\R^{2d}
$$ 
of two first order ordinary differential equations, whose right hand side is  
built from evaluations of $D^2h$ and $D^3 h$ along $\Phi^t_\eps$, respectively. We obtain:

\begin{thm} There exists a constant $C=C(a,V,t)>0$ such that for all square integrable initial data $\psi_0:\R^d\to\C$ with $\|\psi_0\|_{L^2}=1$ and all $\eps>0$ the solution of the Schr\"odinger equation (\ref{eq:schro}) satisfies 
$$
\left|\langle\psi_t,\opwe(a)\psi_t\rangle_{L^2} - \int_{\R^{2d}} F^t_\eps(a_\eps)(z)\, \H^\eps(\psi_0)(z)\, dz \right| \le C \eps^2 
$$
with 
$$
F^t_\eps(a_\eps) = a_\eps\circ\Phi^t_\eps - \tfrac\eps2 \left(\Gamma^t_\eps\cdot\left(\nabla a\circ\Phi^t_\eps\right) + \tr\left(\Lambda^t_\eps \left(D^2 a\circ\Phi^t_\eps\right)\right)\right).
$$
\end{thm}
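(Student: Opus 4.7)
The plan is to switch to the Anti-Wick picture, where Husimi functions play the role that Wigner functions play for Weyl operators, and to run an Egorov-type argument entirely in that picture. The first ingredient is the transition formula between quantizations, $\opwe(a) = \opaw(a_\eps) + \eps^2 R(a)$, with a remainder $R(a)$ bounded on $L^2$. It follows from the defining identity $\opaw(b) = \opwe(b * G_{\eps/2})$, the Taylor expansion $b * G_{\eps/2} = b + \tfrac{\eps}{4}\Delta b + O(\eps^2)$, and a Calder\'on--Vaillancourt bound for the remainder. This reduces the theorem to controlling
$$
\langle\psi_t,\opwe(a)\psi_t\rangle_{L^2} = \int_{\R^{2d}} a_\eps(z)\,\H^\eps(\psi_t)(z)\,dz + O(\eps^2)
$$
by propagating the Anti-Wick symbol $a_\eps$ back to the initial time with error $O(\eps^2)$.

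Next I would set up an Anti-Wick Egorov statement: find a time-dependent symbol $b(t)$ with $b(0) = a_\eps$ such that $U^{-t}\opaw(a_\eps)U^t = \opaw(b(t)) + O(\eps^2)$, with $U^t = e^{-itH/\eps}$. Differentiating $s \mapsto U^{-s}\opaw(b(s))U^s$ and integrating, the error reduces to $\int_0^t\|\opaw(\partial_s b) - \tfrac{i}{\eps}[H,\opaw(b)]\|\,ds$, so it suffices to make this integrand $O(\eps^2)$ on $L^2$ uniformly in $s$. The key ingredient is a commutator rule obtained by expanding $[\opwe(h),\opwe(b*G_{\eps/2})]$ through the cubic Moyal bracket and reconverting to Anti-Wick symbols. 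Using the identity $\Delta\{h,b\} - \{h,\Delta b\} - \{\Delta h,b\} = 2\mathcal{C}(h,b)$, where $\mathcal{C}$ is a symmetric bilinear form in $D^2h$ and $D^2b$, the Laplacian corrections collapse into the modified Hamiltonian $h_\eps = h - \tfrac{\eps}{4}\Delta h$, yielding
$$
\tfrac{i}{\eps}[H,\opaw(b)] = \opaw(\{b,h_\eps\}) + \tfrac{\eps}{2}\opaw(\mathcal{C}(h,b)) + O(\eps^2),
$$
which already explains why the flow $\Phi^t_\eps$ of $h_\eps$ rules the evolution to leading order.

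I would then solve the resulting transport equation by the ansatz $b(t) = a_\eps\circ\Phi^t_\eps + \eps\,c(t)$. The leading part automatically cancels the principal symbol since $\partial_t(a_\eps\circ\Phi^t_\eps) = \{a_\eps\circ\Phi^t_\eps, h_\eps\}$, so $c$ solves the inhomogeneous linear equation $\partial_t c = \{c,h_\eps\} + \tfrac{1}{2}\mathcal{C}(h, a\circ\Phi^t_\eps)$ with $c(0)=0$. Using the chain rule $D^2(a\circ\Phi^t_\eps) = (D\Phi^t_\eps)^\T (D^2a\circ\Phi^t_\eps)(D\Phi^t_\eps) + (\nabla a\circ\Phi^t_\eps)\cdot D^2\Phi^t_\eps$, the source term splits into a piece proportional to $D^2 a\circ\Phi^t_\eps$, with coefficients involving $D^2 h$ only, and a piece proportional to $\nabla a\circ\Phi^t_\eps$, with coefficients involving $D^2 h$ and $D^2\Phi^t_\eps$; the latter is itself driven by $D^3 h$ through the variational equation of the flow. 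Matching coefficients in the ansatz $c(t) = -\tfrac{1}{2}(\Gamma^t_\eps\cdot(\nabla a\circ\Phi^t_\eps) + \tr(\Lambda^t_\eps(D^2a\circ\Phi^t_\eps)))$ then forces exactly the linear ODEs for $\Lambda^t_\eps$ (driven by $D^2h\circ\Phi^t_\eps$) and $\Gamma^t_\eps$ (driven by $D^3h\circ\Phi^t_\eps$) described in the introduction. Plugging $b(t) = F^t_\eps(a_\eps) + O(\eps^2)$ back and pairing with $\psi_0$ produces the claimed expression.

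The main obstacle will be the bookkeeping in the second and third steps: disentangling the two layers of Anti-Wick--Weyl transitions, one entering through the symbol of $H$ and producing $h_\eps$, one entering through $b$ and producing $\mathcal{C}$, and then reorganising the resulting bilinear source into the ODE system whose unknowns $\Gamma^t_\eps$ and $\Lambda^t_\eps$ depend on $a$ only through $\nabla a\circ\Phi^t_\eps$ and $D^2 a\circ\Phi^t_\eps$. Uniform $L^2$-bounds on the various Anti-Wick operator remainders, needed for the Duhamel estimate, are standard under the stated smoothness and growth hypotheses on $V$ and $a$.
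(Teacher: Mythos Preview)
Your approach is essentially the same as the paper's: the Weyl--Anti-Wick transition $\opwe(a)=\opaw(a_\eps)+O(\eps^2)$, the commutator rule
\[
\tfrac{i}{\eps}[\opwe(h),\opaw(b)]=\opaw\bigl(\{h_\eps,b\}-\tfrac{\eps}{2}\,\tr(J\,D^2h\,D^2b)\bigr)+O(\eps^2),
\]
the Duhamel argument, and the chain-rule splitting of $D^2(a\circ\Phi^t_\eps)$ into a $(D^2a)\!\circ\!\Phi^t_\eps$ part and a $(\nabla a)\!\circ\!\Phi^t_\eps$ part are exactly the ingredients the paper isolates as Lemmas~\ref{AW-Weyl_connection}--\ref{commutator}, Theorem~\ref{husimi_egorov}, and Lemma~\ref{lem:lambda_gamma}. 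Two small remarks. First, your bilinear form $\mathcal C(h,b)=\tr(J\,D^2h\,D^2b)$ is antisymmetric in $(h,b)$, not symmetric. Second, and more substantively, the paper does \emph{not} obtain the ODEs for $\Lambda^t_\eps,\Gamma^t_\eps$ by matching coefficients in the transport equation for $c$. If you carry out that matching, the Leibniz rule and the flow identity $(\partial_t-\{h_\eps,\cdot\})(g\circ\Phi^t_\eps)=0$ give transport \emph{PDEs}
\[
(\partial_t-\{h_\eps,\cdot\})\Lambda = (D\Phi^t_\eps)\,J\,D^2h\,(D\Phi^t_\eps)^{\!\T},\qquad
(\partial_t-\{h_\eps,\cdot\})\Gamma_i = \tr\bigl(J\,D^2h\,D^2(\Phi^t_\eps)_i\bigr),
\]
whose sources still carry $D\Phi^t_\eps$ and $D^2\Phi^t_\eps$; these are not the closed ODEs $\partial_t\Lambda=M+M\Lambda+\Lambda M^{\T}$ etc.\ announced in the introduction. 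The paper instead first writes the correction as the explicit time integral $\Xi^t_\eps(a)$ of Theorem~\ref{husimi_egorov}, rewrites it via the chain rule as $\tr(\widetilde\Lambda^t_\eps\,D^2a\!\circ\!\Phi^t_\eps)+\widetilde\Gamma^t_\eps\!\cdot\!\nabla a\!\circ\!\Phi^t_\eps$ with integral formulas for $\widetilde\Lambda^t_\eps,\widetilde\Gamma^t_\eps$, and only then differentiates those integrals in $t$ (using the variational equation $\partial_\tau D\Phi^\tau_\eps=(J\,D^2h_\eps\!\circ\!\Phi^\tau_\eps)\,D\Phi^\tau_\eps$) to arrive at the clean $a$-independent ODEs. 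Your route and the paper's coincide up to that last step; to actually land on the stated ODEs you would still need the computation carried out in the proof of Theorem~\ref{corrected_flow-equation}.
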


The method of proof relies on a systematic transition between Weyl and Anti-Wick quantization combined with an $\eps$-expansion of commutators in Weyl quantization. Therefore, the generalization of the approximation to errors of the order $\eps^k$, $k\ge2$, is possible. 

The theorem allows an algorithmic realization in the spirit of the particle method developed in \cite{LR10}, which is suitable for the fast computation of expectation values in high dimensions: sample the initial Husimi function $\H^\eps(\psi_0)$, discretize the ordinary differential equations for $\Phi^t_\eps$, $\Gamma_t^\eps$ and $\Lambda_t^\eps$, propagate the sample points along the discretized flow, and compute the expectation value at time $t$ by averaging over $F^t_\eps(a)$-evaluations for the sample points. Our numerical experiments for Schr\"odinger equations in dimensions two and six, one with a torsional potential the other with a Henon-Heiles potential, indicate that both the initial sampling and the flow discretizations can acchieve sufficient accuracy to confirm the approximation's asymptotic convergence rate.  
     
\subsection{Related research}

\cite{LR10} has developed a discrete version of the Egorov theorem by propagating samples drawn from the initial Wigner function along the classical flow $\Phi^t$. The negativity of the Wigner function has been accounted for by stratified sampling as well as sampling from the uniform distribution on the effective support of the Wigner function. The observation, that the approximation order deteriorates from second to first order in $\eps$ when merely replacing the Wigner by the Husimi function, has been formulated in \cite[Proposition~1]{KLW09} in the context of surface hopping algorithms. 

If the aim is not just the approximation of expectation values but of the wave function itself, then semiclassics have been successfully applied as well. The time-splitting algorithm of \cite{FGL09} uses a Galerkin approximation with Hagedorn wave packets, which are products of polynomials with a Gaussian function of time-varying first and second moments. Related approximations are the Gaussian beam methods, whose convergence for the Schr\"odinger equation, the acoustic wave equation as well as  strictly hyperbolic systems has been proven in \cite[Theorem~1.1]{LRT10}. 

A complementary approximation ansatz combines Gaussians of fixed width with time-varying prefactors, whose evolution resembles the one of the width matrices of the Gaussian envelopes of both the Hagedorn wave packets and the Gaussian beams as well as the evolution of our correction term $\Lambda^t_\eps$. The Herman-Kluk propagator \cite[Theorem~2]{SR09}, \cite[Theorem~1.2]{R10} as well as the frozen Gaussian approximation for strictly hyperbolic systems \cite[Theorem~4.1]{LY12} have been systematically developed to $k$-th order in $\eps$. 

\subsection{Outline}

The following Section~\S\ref{husimi_functions} begins with the comparison of Husimi and Wigner functions and provides  asymptotic expansions in $\eps$ for the transition from Weyl to Anti-Wick quantization and backwards in Lemma~\ref{AW-Weyl_connection} and Lemma~\ref{deconvolution}, respectively. Section~\S\ref{sec:propagation} develops and proves as Theorem~\ref{husimi_egorov} our first result, the second order Egorov approximation for Husimi functions. 
Our second result, Theorem~\ref{corrected_flow-equation}, reformulates the correction term of Theorem~\ref{husimi_egorov}, which has been derived as a time-integral along the $\Phi^t_\eps$-flow, with ordinary differential equations. Section~\S\ref{sec:flow} proposes a symmetric splitting method for the simultaneous discretization of $\Phi^t_\eps$, $\Lambda^t_\eps$, and $\Gamma^t_\eps$. The final 
Section~\S\ref{sec:numerics} presents our numerical results for two molecular Schr\"odinger equations in dimension two and six, respectively.    
\section{Husimi functions and commutators}\label{husimi_functions}

Let $\psi:\R^d\to\C$ be a square integrable function and $\W^\eps(\psi):\R^{2d}\to\R$, 
$$
\W^\eps(\psi)(q,p) = (2\pi\eps)^{-d}\int_{\R^d} e^{i p\cdot y/\eps} \psi(q-\tfrac12 y) \overline\psi(q+\tfrac12 y) dy,
$$
its Wigner function. The Wigner function is a continuous function on phase space. Its marginals are the position and momentum density of $\psi$, respectively,
$$
\int_{\R^d} \W^\eps(\psi)(q,p) dp = |\psi(q)|^2,\qquad \int_{\R^d} \W^\eps(\psi)(q,p) dq = |{\mathcal F}^\eps(\psi)(p)|^2,
$$
where ${\mathcal F}^\eps(\psi)(p)=(2\pi\eps)^{-d/2}\int_{\R^d} e^{-iq\cdot p/\eps}\psi(q)dq$ is the $\eps$-scaled Fourier transform. 
Moreover, associating with a Schwartz function $a:\R^{2d}\to\R$ the Weyl quantized operator
$$
(\opwe(a)\psi)(q) = (2\pi\eps)^{-d} \int_{\R^d}\int_{\R^d} a(\tfrac{q+y}{2},p) e^{i(q-y)\cdot p/\eps}\psi(y) dp dy,
$$
the Wigner function encodes corresponding expectation values via phase space integration,
$$
\langle\psi,\opwe(a)\psi\rangle_{L^2} = \int_{\R^{2d}} a(z) \W^\eps(\psi)(z) dz.
$$ 
However, despite its positive marginals, the Wigner function may attain negative values. This is easily seen for odd functions, 
which must satisfy $$\W^\eps(\psi)(0,0)=-(2\pi\eps)^{-d}\|\psi\|_{L^2}^2~.$$

\subsection{Husimi functions}
A proper smoothing of the Wigner function results in a nonnonegative phase space function, the so-called Husimi function:

\begin{mdef} 
Let $\psi:\R^d\to\C$ be square integrable and $\W^\eps(\psi)$ its Wigner function. Let $a:\R^{2d}\to\R$ be a Schwartz function.  
We denote by 
$$
G_{\eps/2}(z) = (\pi \eps)^{-d}e^{-|z|^2/\eps},\qquad z\in\R^{2d},
$$ 
the centered phase space Gaussian with covariance $\tfrac\eps2{\rm Id}$. Then, 
$$
\H^\eps(\psi) = \W^\eps(\psi)* G_{\eps/2},\qquad \opaw(a) = \opwe(a* G_{\eps/2}) 
$$
are called the {\em Husimi function} of $\psi$ and the {\em Anti-Wick operator} of $a$, respectively.
\end{mdef}

By construction the Husimi function is a smooth function satisfying
\begin{equation}
\label{husimi-equality}
\left\langle \psi, \opaw(a) \psi \right\rangle_{L^2} = \int_{\R^{2d}} a(z) \H^\eps(\psi)(z)\;dz. 
\end{equation}
Especially, $\|\H^\eps(\psi)\|_{L^1}=\|\psi\|_{L^2}^2$. One can prove, that $\H^\eps(\psi)$ is the modulus squared of the FBI transform of $\psi$ and therefore nonnegative.
Moreover, smoothing the Wigner function with Gaussians of smaller covariance does not yield positivity. 
In the following Proposition~\ref{prop:husimi}, we summarize these observations combining arguments of the proofs of \cite[Proposition~2.20]{Z08} and \cite[Theorem~4.2]{B67}. 

\begin{prop}\label{prop:husimi} 
If $\sigma\ge\eps$, then $\W^\eps(\psi)*G_{\sigma/2}\ge0$ for all $\psi\in L^2(\R^d)$. If $\sigma<\eps$, then there 
exists $\psi\in L^2(\R^d)$ such that $\W^\eps(\psi)*G_{\sigma/2}$ attains negative values.
\end{prop}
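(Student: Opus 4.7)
The proposition splits into a sufficient and a necessary part, which I would handle separately. For the sufficient direction ($\sigma\ge\eps$), the idea is to exploit the semigroup property of centered Gaussians, $G_{\eps/2}*G_{(\sigma-\eps)/2}=G_{\sigma/2}$, where the second factor is interpreted as the Dirac mass at zero when $\sigma=\eps$. Associativity of convolution then yields $\W^\eps(\psi)*G_{\sigma/2}=\H^\eps(\psi)*G_{(\sigma-\eps)/2}$, and combining the nonnegativity of $\H^\eps(\psi)$ (the FBI-transform representation announced just above the proposition) with the positivity of the Gaussian kernel gives the conclusion pointwise.

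For the necessary direction ($\sigma<\eps$), the plan is to exhibit one explicit $\psi$ at which $(\W^\eps(\psi)*G_{\sigma/2})(0)$ is negative. A tensor-product reduction brings this to dimension one: taking $\psi=\phi_1\otimes\phi_0^{\otimes(d-1)}$ with $\phi_0,\phi_1$ the $\eps$-scaled ground and first-excited Hermite states, both $\W^\eps(\psi)$ and $G_{\sigma/2}$ factor over coordinate pairs, and the evaluation at the origin becomes a product of one nontrivial two-dimensional integral against $\phi_1$ times $d-1$ positive Gaussian-against-Gaussian integrals against $\phi_0$. The nontrivial factor is computed from the Laguerre expression $\W^\eps(\phi_1)(q,p)=-(\pi\eps)^{-1}(1-2(q^2+p^2)/\eps)\,e^{-(q^2+p^2)/\eps}$ in polar coordinates, using the elementary moments of $e^{-\beta r^2}$, and it yields a closed-form value proportional to $\sigma-\eps$ that vanishes exactly at $\sigma=\eps$.

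The main obstacle is arithmetic rather than conceptual: keeping the $\eps$-scaled normalizations of the Hermite states consistent with the Wigner transform conventions used in the paper, and checking that the boundary $\sigma=\eps$ is covered uniformly by both halves of the argument. The underlying point is that the first excited Hermite state already witnesses the sharp threshold, because its Husimi function vanishes precisely at the origin, so any strictly narrower Gaussian smoothing of its Wigner function turns that zero into a dip below zero.
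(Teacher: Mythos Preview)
Your proposal is correct. Both you and the paper use the Gaussian semigroup identity for $\sigma>\eps$ and essentially the same witness (the first excited Hermite state, i.e.\ $\psi(v)=v_1 e^{-|v|^2}$ up to scaling) for $\sigma<\eps$, so the overall architecture agrees.

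The execution differs, however. The paper derives a single integral representation: after the change of variables $v=x-\tfrac12 y$, $w=x+\tfrac12 y$ one gets $(\W^\eps(\psi)*G_{\sigma/2})(q,p)=\int f(v)\overline{f(w)}\,e^{-\alpha|w-v|^2}\,dv\,dw$ with $\alpha=(\sigma^2/\eps^2-1)/(4\sigma)$, and then Taylor-expands $e^{2\alpha v\cdot w}$ into a power series of manifestly nonnegative terms weighted by $(2\alpha)^k$. This one computation handles both the case $\sigma=\eps$ (where $\alpha=0$ and only the $k=0$ term $|\!\int f|^2$ survives, thereby \emph{proving} the FBI-squared fact rather than citing it) and the case $\sigma<\eps$ (where $\alpha<0$ and, for the odd witness, only odd-$k$ terms contribute, all negative). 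Your route is more modular: you invoke the announced FBI representation for the borderline case and compute the necessary direction by a direct Gaussian moment calculation against the Laguerre formula, obtaining the clean closed form $(\sigma-\eps)/(\pi(\sigma+\eps)^2)$ in the one-dimensional factor. Your version is arguably more transparent and yields an explicit value, while the paper's is more self-contained and unified, needing neither the FBI identity nor the Hermite--Laguerre Wigner formula as external input.
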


\begin{proof}
We write
\begin{eqnarray*}
\lefteqn{\left(\W^\eps(\psi)*G_{\sigma/2}\right)(q,p)}\\
&=& (\pi\sigma)^{-d} (2\pi\eps)^{-d} \int_{\R^{3d}} e^{i\xi\cdot y/\eps} \psi(x-\tfrac12 y) \overline\psi(x+\tfrac12y) e^{-(|x-q|^2+|\xi-p|^2)/\sigma} dy dx d\xi\\
&=& (\pi\sigma)^{-d/2} (2\pi\eps)^{-d}
\int_{\R^{2d}} e^{ip\cdot y/\eps} \psi(x-\tfrac12 y)\overline\psi(x+\tfrac12y) e^{-|x-q|^2/\sigma} e^{-\frac{\sigma}{4\eps^2}|y|^2} dy dx,
\end{eqnarray*}
since
$$
\int_{\R^d} e^{i\xi\cdot y/\eps} e^{-|\xi-p|^2/\sigma} d\xi = (\sigma\pi)^{d/2} e^{ip\cdot y/\eps} e^{-\frac{\sigma}{4\eps^2}|y|^2}.
$$
We set $\alpha=\left(\sigma^2/\eps^2-1\right)/(4\sigma)$.  
The change of variables $x-\frac12y=v$, $x+\frac12y=w$ together with the relation
$$
|x-q|^2 + \tfrac14|y|^2 = \tfrac12|v-q|^2 + \tfrac12|w-q|^2
$$
and
$$
\tfrac1\sigma|x-q|^2 + \tfrac{\sigma}{4\eps^2}|y|^2 = \tfrac{1}{2\sigma}|v-q|^2 + \tfrac{1}{2\sigma}|w-q|^2 + \alpha|w-v|^2
$$
yields
$$
\left(\W^\eps(\psi)*G_{\sigma/2}\right)(q,p) = \int_{\R^{2d}} f(v) \overline f(w) e^{-\alpha |w-v|^2} dv dw 
$$
with 
$$
f(v) = (\pi\sigma)^{-d/4} (2\pi\eps)^{-d/2} \; e^{-ip\cdot v/\eps} \psi(v) e^{-|v-q|^2/(2\sigma)}.
$$
The Taylor series of $\alpha\mapsto e^{2\alpha v\cdot w}$ then gives 
$$
\left(\W^\eps(\psi)*G_{\sigma/2}\right)(q,p) = \sum_{k=0}^\infty \frac{(2\alpha)^k}{k!} \sum_{i_1,\ldots,i_k} \left| \int_{\R^{d}} v_{i_1}\cdots v_{i_k} f(v)e^{-\alpha |v|^2} dv \right|^2.
$$
If $\sigma=\eps$, then $\alpha=0$ and 
$$
\forall\psi\in L^2(\R^d):\quad \W^\eps(\psi)*G_{\eps/2} = \left| \int_{\R^{d}} f(v) dv\right|^2\ge0.
$$
If $\sigma>\eps$, then $G_{\sigma/2}=G_{\eps/2}*G_{(\sigma-\eps)/2}$ and 
$$
\forall\psi\in L^2(\R^d):\quad\W^\eps(\psi)*G_{\sigma/2}=\left(\W^\eps(\psi)*G_{\eps/2}\right)*G_{(\sigma-\eps)/2}>0.
$$ 
If $\sigma<\eps$, then $\alpha<0$. Therefore, choosing $\psi(v)=v_1 e^{-|v|^2}$, we obtain 
$$
\left(\W^\eps(\psi)*G_{\sigma/2}\right)(0,0) = \sum_{k\;{\rm odd}}^\infty  \frac{(2\alpha)^k}{k!} \sum_{i_1,\ldots,i_k} \left| \int_{\R^{d}} v_{i_1}\cdots v_{i_k} f(v)e^{-\alpha |v|^2} dv \right|^2 < 0.
$$
\end{proof}

\subsection{Weyl and Anti-Wick quantization}

The nonnegativity of the Husimi function is shared by the Anti-Wick quantization, which satisfies $a\ge0\Rightarrow\opaw(a)\ge0$ by identity (\ref{husimi-equality}).    
The following Lemma expresses $\opaw(a)$ as a Weyl quantized operator, whose 
symbol is obtained from $a$ by adding higher order derivatives. It extends the first order approximation of \cite[Proposition~2.4.3]{L10} to higher orders. 

\begin{lem}\label{AW-Weyl_connection}
Let $a:\R^{2d}\to\R$ be a Schwartz function, $n\in\N$, and $\eps>0$. There exists a family of 
Schwartz functions $r_n^\eps(a):\R^{2d}\to\R$ with $\sup_{\eps>0}\|\opwe(r_n^\eps(a))\|_{\mathcal{L}(L^2)}<\infty$ 
such that     
$$
\opaw(a) = \opwe \left(a + \sum\limits_{k=1}^{n-1}
\frac{\eps^k }{4^k k!}  \Delta^ka \right) + \eps^{n}\opwe(r_n^\eps(a)).
$$
\end{lem}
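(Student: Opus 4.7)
The plan is to view $\opaw(a)=\opwe(a*G_{\eps/2})$ as a Weyl-quantized operator with a smoothed symbol, expand the Gaussian convolution in powers of $\eps$, and control the remainder operator via the Calderón--Vaillancourt theorem. Since $\opaw(a)=\opwe(a*G_{\eps/2})$ by definition, everything reduces to a symbol identity:
$$a*G_{\eps/2} = a + \sum_{k=1}^{n-1}\frac{\eps^k}{4^k k!}\Delta^k a + \eps^{n} r_n^\eps(a),$$
where $r_n^\eps(a)$ must be shown to be Schwartz with seminorms bounded uniformly in $\eps$.

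The cleanest way to identify the coefficients is in Fourier: since $\widehat{G_{\eps/2}}(\xi)=e^{-\eps|\xi|^2/4}$, a finite Taylor expansion of the exponential and the identity $\widehat{\Delta^k a}(\xi)=(-|\xi|^2)^k \hat a(\xi)$ produce the principal terms $\tfrac{\eps^k}{4^k k!}\Delta^k a$. Equivalently, on the spatial side one Taylor expands $a(z-w)$ to order $2n-1$ in $w$; odd-order terms vanish by symmetry of the Gaussian, and Isserlis' formula for the moments of $G_{\eps/2}$ on $\R^{2d}$ (covariance $\tfrac{\eps}{2}\Id$) contributes $(2k-1)!!$ Wick pairings, each of weight $(\eps/2)^k$, that collapse the trace of $\partial^{2k}a$ to $\Delta^k a$ and give the coefficient $\eps^k/(4^k k!)$.

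Next, I would define the remainder by
$$r_n^\eps(a)(z) \;=\; \eps^{-n}\int_{\R^{2d}}\Bigl(a(z-w)-\sum_{|\alpha|<2n}\frac{(-w)^\alpha}{\alpha!}\partial^\alpha a(z)\Bigr)G_{\eps/2}(w)\,dw$$
and analyze it through the rescaling $w=\sqrt{\eps}\,u$, which turns $G_{\eps/2}(w)\,dw$ into the fixed standard Gaussian $\pi^{-d}e^{-|u|^2}du$. The integral form of Taylor's remainder bounds the bracketed term pointwise by a constant times $|w|^{2n}=\eps^n|u|^{2n}$ times a supremum of $|\partial^{2n}a|$ along the segment $[z-\sqrt{\eps}\,u,z]$, so the prefactor $\eps^{-n}$ is exactly absorbed. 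Differentiating in $z$ under the integral merely moves derivatives onto $a$, so each Schwartz seminorm of $r_n^\eps(a)$ is dominated by finitely many Schwartz seminorms of $a$, uniformly in $\eps>0$.

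Finally, the Calderón--Vaillancourt theorem furnishes an absolute $M=M(d)$ and $C>0$ such that $\|\opwe(b)\|_{\mathcal{L}(L^2)}\le C\max_{|\alpha|\le M}\|\partial^\alpha b\|_{L^\infty}$, and the uniform bound on derivatives of $r_n^\eps(a)$ then yields $\sup_{\eps>0}\|\opwe(r_n^\eps(a))\|_{\mathcal{L}(L^2)}<\infty$. The main delicate point I anticipate is the bookkeeping in the remainder step: one must verify that the rescaling $w=\sqrt{\eps}\,u$ absorbs the $\eps^{-n}$ prefactor cleanly \emph{together with} all $z$-derivatives, so that no inverse powers of $\eps$ creep into any of the Schwartz seminorms needed for Calderón--Vaillancourt; everything else reduces to routine Wick combinatorics and a classical operator-norm bound.
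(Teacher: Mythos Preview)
Your proposal is correct and follows essentially the same route as the paper: reduce to the symbol identity for $a*G_{\eps/2}$, Taylor expand to order $2n-1$, kill odd terms by parity, identify the even-order Gaussian moments with $\tfrac{\eps^k}{4^k k!}\Delta^k a$, write the remainder via the integral Taylor formula with the rescaling $w=\sqrt{\eps}\,u$, and conclude by Calder\'on--Vaillancourt. The only cosmetic addition is your Fourier-side remark that $\widehat{G_{\eps/2}}(\xi)=e^{-\eps|\xi|^2/4}$ makes the coefficients transparent; the paper stays entirely on the spatial side and computes the one-dimensional moments $\pi^{-1/2}\int y^{2m}e^{-y^2}dy=(2m)!/(4^m m!)$ directly rather than invoking Isserlis' formula.
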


\begin{proof} We work with
$$
(a*G_{\eps/2})(z) = (\pi \eps)^{-d}\int_{\R^{2d}}a(\zeta)e^{-|\zeta-z|^2/\eps}~d\zeta
$$
and Taylor expand $a$ around the base point $z$, 
\begin{eqnarray*}
a(\zeta) &=& \sum\limits_{|\alpha|=0}^{2n-1}\frac{1}{\alpha!}(\partial^\alpha a)(z)(\zeta-z)^\alpha\\
&& + 2n \sum_{|\alpha|=2n} \frac{(\zeta-z)^\alpha}{\alpha!}\int_0^1(1-\theta)^{2n-1}(\partial^\alpha a)
(z+\theta(\zeta-z))~d\theta
\end{eqnarray*}
with standard multiindex notation.
The terms with $a$-derivatives of odd degree do not contribute, since
$$
\int_{\R^{2d}} f(\zeta-z)e^{-|\zeta-z|^2/\eps}~d\zeta = 0
$$
for odd functions $f$. For the derivatives of even degree we obtain
\begin{eqnarray*}
\lefteqn{(\pi \eps)^{-d}\sum_{|\alpha|=m} \int_{\R^{2d}} \frac{1}{(2\alpha)!} 
(\partial^{2\alpha}a)(z)(\zeta-z)^{2\alpha}e^{-|\zeta-z|^2/\eps}~d\zeta} \\
&=& \sum_{|\alpha|=m} \eps^m \frac{\pi^{-d}}{(2\alpha)!} (\partial^{2\alpha} a)(z) 
\int_{\R^{2d}} y_{i_1}^{2\alpha_1}\cdots y_{i_{2k}}^{2\alpha_{2d}}e^{-|y|^2}~dy.
\end{eqnarray*}
Since
\[
\frac{1}{\sqrt \pi}\int_\R y^{2m} e^{-y^2}~dy = \frac{(2m-1)!!}{2^m} = \frac{(2m)!}{4^m  m! }
\]
with  $(2m-1)!! := (2m-1)\cdot(2m-3)\cdots3\cdot 1$,
we obtain
\begin{eqnarray*}
\lefteqn{\sum_{|\alpha|=m}\eps^{m} \frac{ \pi^{-d} }{(2\alpha)!}  (\partial^{2\alpha} a)(z)
\int_{\R^{2d}} y_{i_1}^{2\alpha_1}\cdots y_{i_{2d}}^{2\alpha_{2d}}e^{-|y|^2}~dy}\\
&=& \frac{\eps^{m}}{4^m} \sum_{|\alpha|=m} \frac{1}{\alpha!} (\partial^{2\alpha} a)(z) ~=~ 
\frac{\eps^{m}}{4^m m!} \Delta^ma(z).
\end{eqnarray*}
Consequently, 
$$
(a*G_{\eps/2})(z) = \sum\limits_{k=0}^{n-1}\frac{\eps^k}{4^k k!}\Delta^k a(z) + \eps^n r_n^\eps(a)(z),
$$
with 
$$
r_n^\eps(a)(z)=\sum_{|\alpha|=2n}\frac{\pi^{-d} 2n}{\alpha!} \int_{\R^{2d}}\int_0^1(1-\theta)^{2n-1}
(\partial^{\alpha}a)(z+\sqrt\eps \theta y)y^\alpha e^{-|y|^2}~d\theta~dy.
$$
Moreover, 
$$
\|\opwe(r_n^\eps(a))\|_{\mathcal{L}(L^2)} < C\sup_{|\alpha|,|\beta|\le\lceil d/2\rceil+1}
\|\partial_q^\alpha\partial_p^\beta a^{(2n)}\|_\infty
$$ 
for some constant $C=C(d,n)>0$ by the Calderon-Vaillancourt Theorem. 
\end{proof}

By means of Lemma~\ref{AW-Weyl_connection}, we can also write $\opwe(a)$ as an Anti-Wick 
quantized operator by additively correcting $a$ with its derivatives.  

\begin{lem}\label{deconvolution}
Let $a:\R^{2d}\to\R$ be a Schwartz function, $n\in\N$, and $\eps>0$. There exists a family of Schwartz functions $\rho_n^\eps(a):\R^{2d}\to\R$ with $\sup_{\eps>0}\|\opwe(\rho_n^\eps(a))\|_{\mathcal{L}(L^2)}<\infty$ 
such that
$$
\opwe(a) = \opaw\left(a+ \sum_{k=1}^{n-1} \frac{(-\eps)^k}{4^k k!}  \Delta^k  a \right)+ 
\eps^n \opwe(\rho_n^\eps(a)).
$$
\end{lem}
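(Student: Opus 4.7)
The plan is to invert the expansion of Lemma~\ref{AW-Weyl_connection} formally in $\eps$ and verify that the resulting series (truncated at order $n{-}1$) indeed reconstructs $a$ up to an $\eps^n$ error. Concretely, I would define the candidate symbol
$$
b := a + \sum_{k=1}^{n-1} \frac{(-\eps)^k}{4^k k!}\,\Delta^k a
$$
and apply Lemma~\ref{AW-Weyl_connection} to $b$ in place of $a$, obtaining
$$
\opaw(b) = \opwe\!\left( \sum_{j=0}^{n-1} \frac{\eps^j}{4^j j!}\,\Delta^j b\right) + \eps^n \opwe\bigl(r_n^\eps(b)\bigr).
$$
Inserting the definition of $b$ reduces the claim to an algebraic identity for the coefficients.

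The key step is to rearrange the resulting double sum by the total power $m=j+k$ of $\eps$. Setting $j+k=m$ and using multiplicativity of the Laplacian, one gets
$$
\sum_{j=0}^{n-1} \frac{\eps^j}{4^j j!}\,\Delta^j b
= \sum_{m=0}^{2n-2} \frac{\eps^m}{4^m}\,\Delta^m a \sum_{\substack{j+k=m\\ 0\le j,k\le n-1}} \frac{(-1)^k}{j!\,k!}.
$$
For $0\le m\le n-1$ the inequality constraints $j,k\le n-1$ are inactive, so the inner sum equals $\tfrac{1}{m!}\sum_{k=0}^m \binom{m}{k}(-1)^k = \delta_{m,0}$, giving back exactly the leading symbol $a$ and cancelling all lower-order corrections. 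The remaining terms correspond to $n\le m\le 2n-2$ and thus carry at least a factor $\eps^n$; I would collect them, together with $r_n^\eps(b)$, into a single family $\rho_n^\eps(a)$.

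Finally I would verify the uniform operator-norm bound. Since $b$ is a finite linear combination of $\eps$-powers times derivatives of $a$ with fixed Schwartz-class coefficients, all seminorms of $\partial_q^\alpha\partial_p^\beta b$ are uniformly bounded for $\eps\in(0,1]$, so the Calderon--Vaillancourt argument used at the end of Lemma~\ref{AW-Weyl_connection} applies verbatim to $r_n^\eps(b)$; the extra terms from $n\le m\le 2n-2$ involve only $\Delta^m a$ with $m\le 2n-2$, hence are Schwartz and also Calderon--Vaillancourt-bounded uniformly in $\eps$.

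The main obstacle is purely bookkeeping: one has to be careful with the index-range restriction $j,k\le n-1$ when separating the ``principal'' part $0\le m\le n-1$ (where binomial cancellation occurs) from the ``remainder'' part $n\le m\le 2n-2$ (where the restriction is active and terms genuinely contribute to $\rho_n^\eps(a)$). Once this combinatorial separation is done cleanly, the remaining boundedness statement is an immediate consequence of the hypothesis that $a$ is Schwartz.
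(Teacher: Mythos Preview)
Your strategy is the same as the paper's: apply Lemma~\ref{AW-Weyl_connection} to the candidate $b=\sum_{k=0}^{n-1}\frac{(-\eps)^k}{4^kk!}\Delta^k a$ and use the binomial identity $\sum_{k=0}^m\binom{m}{k}(-1)^k=\delta_{m,0}$ to recover $a$. The combinatorics you outline are correct.

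There is, however, one small but genuine gap in the uniform bound. You apply Lemma~\ref{AW-Weyl_connection} to $b$ with \emph{fixed} order $n$, which produces both the remainder $r_n^\eps(b)$ and the leftover terms with $n\le m\le 2n-2$. After dividing out $\eps^n$, those leftover terms contribute $\eps^{m-n}\Delta^m a$ to $\rho_n^\eps(a)$, and $r_n^\eps(b)=\sum_{j}\frac{(-\eps)^j}{4^jj!}\,r_n^\eps(\Delta^j a)$ likewise carries positive powers of $\eps$. Hence your $\rho_n^\eps(a)$ is only bounded for $\eps$ in a compact set, which is why you had to restrict to $\eps\in(0,1]$; the lemma asserts $\sup_{\eps>0}$. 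The paper avoids this by applying Lemma~\ref{AW-Weyl_connection} with \emph{variable} order $n-j$ to the $j$-th summand $\frac{(-\eps)^j}{4^jj!}\Delta^j a$. Then every remainder comes with exactly $\eps^{j}\cdot\eps^{n-j}=\eps^n$, the double sum is automatically confined to $k+j\le n-1$ (so no ``extra'' $m\ge n$ terms appear), and $\rho_n^\eps(a)$ becomes a finite linear combination of $r_{n-j}^\eps(\Delta^j a)$'s with $\eps$-free coefficients, each uniformly bounded by Lemma~\ref{AW-Weyl_connection}. This single adjustment closes the gap.
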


\begin{proof} 
Applying Lemma~\ref{AW-Weyl_connection} yields
\begin{eqnarray*}
G_{\eps/2}* \left(a+ \sum_{k=1}^{n-1} \frac{(-\eps)^k}{4^k \cdot k!}  \Delta^k  a \right)
&=& \sum\limits_{j=0}^{n-1} \left( \sum_{k=0}^{n-1-j} 
\frac{\eps^{k+j} (-1)^j}{4^{k+j} \cdot k! \cdot j!} \Delta^{k+j} a\right)~-~\eps^n \rho_n^\eps(a) \\
&=& \sum_{m=0}^{n-1} \frac{\eps^m}{4^m} \Delta^m a \frac{1}{m!} 
\sum_{j=0}^m \binom{m}{j} (-1)^j ~-~\eps^n \rho_n^\eps(a) \\
&=& a ~-~\eps^n \rho_n^\eps(a) 
\end{eqnarray*}
where  the Schwartz function $\rho_n^\eps(a)$ satisfies
\[
\|\opwe(\rho_n^\eps(a))\|_{\mathcal{L}(L^2)}
<C\sup_{|\alpha|,|\beta|\le\lceil d/2\rceil+1} \|\partial_q^\alpha\partial_p^\beta a^{(2n)}\|_\infty
\] 
for some constant $C=C(d,n)>0$. 
\end{proof}

For further reference, we summarize the explicit form of the second order approximations of Lemma~\ref{AW-Weyl_connection} and Lemma~\ref{deconvolution}:
\begin{eqnarray}
\label{AW_second}
\opaw(a) &=& \opwe\left(a + \tfrac{\eps}{4}\Delta a\right) + \eps^{2}\opwe(r_2^\eps(a)),\\
\label{We_second}
\opwe(a) &=& \opaw\left(a - \tfrac{\eps}{4}\Delta a\right)+ \eps^{2}\opwe(\rho_2^\eps(a)). 
\end{eqnarray}

\subsection{Commutators}

When approximating Husimi functions and expectation values for Anti-Wick quantized operators with respect to the solution of semiclassical Schr\"odinger equations, we will face commutators of Weyl and Anti-Wick quantized operators. For their asymptotic expansion we need additional notation: Let $a,b\in{\mathcal S}(\R^{2d})$ be Schwartz functions. Then, the composition of their Weyl quantized operators is the Weyl quantized operator of the Moyal 
product $a\sharp b$, 
\begin{equation}\label{moyal_prod}
\opwe(a)\opwe(b) = \opwe(a\sharp b).
\end{equation}
The commutator $[\opwe(a),\opwe(b)]=\opwe(a)\opwe(b)-\opwe(b)\opwe(a)$ is the Weyl quantization of the Moyal commutator $a\sharp b-b\sharp a$, which has an $\eps$-expansion 
\begin{equation}
\label{eq:sharp}
a \sharp b-b\sharp a = \sum_{k=0}^{n-1} \eps^k \Theta_k(a,b) + \eps^{n} R^\eps_n(a,b),
\end{equation}
where 
\begin{equation}
\label{moyal_expansion}
\Theta_k(a,b)(z) = \frac{i^k}{k!}  \left.A(D)^k (a(z)b(z')-b(z)a(z'))\right|_{z'=z}
\end{equation}
is obtained from the $k$-fold application of 
\begin{equation}\label{bidiff_moyal}
A(D)(a(z)b(z'))= \tfrac{1}{2}J\nabla a(z)\cdot\nabla b(z'),\qquad J = \begin{pmatrix} 0 & {\rm Id}\\ -{\rm Id} & 0\end{pmatrix}
\end{equation}
and $\sup_{\eps>0}\|\opwe(R^\eps_n(a,b))\|_{\mathcal{L}(L^2)}<\infty$, see e.g. \cite[Section 4.3]{Z12}. We note, that $\Theta_k(a,b)=0$ for $k$ even. For commutators of Weyl and Anti-Wick quantized operators we derive the following expansion.

\begin{lem}\label{lem:com}
Let $a,b:\R^{2d}\to\R$ be a smooth function of subquadratic growth and a Schwartz function, respectively, and
let $n\in\N$, and $\eps>0$. Then, $\frac{i}{\eps} \left[ \opwe(a),\opaw(b)\right]$ is essentially self-adjoint in $L^2(\R^d)$ with core ${\mathcal S}(\R^{d})$, and there exists a family of Schwartz functions $\varrho_n^\eps(a,b):\R^{2d}\to\R$ with 
$\sup_{\eps>0}\|\opwe(\varrho^\eps_n(a,b))\|_{\mathcal{L}(L^2)}<\infty$ such that  
$$
\frac{i}{\eps}\left[\opwe(a),\opaw(b)\right] = \sum_{k=0}^{n-1} \eps^k \opwe(\theta_k(a,b)) +
 \eps^n\opwe(\varrho^\eps_n(a,b))
$$
with 
$$
\theta_k(a,b)=i\sum_{\ell+m=k+1} \frac{1}{4^\ell  \ell!} \Theta_m(a,\Delta^\ell b),
$$ 
where $\Theta_m$ has been defined in (\ref{moyal_expansion}).
\end{lem}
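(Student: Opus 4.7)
The plan is to reduce the mixed Weyl/Anti-Wick commutator to a purely Weyl commutator and then apply the Moyal expansion~(\ref{eq:sharp}). By definition $\opaw(b) = \opwe(b \ast G_{\eps/2})$, and since $b$ is Schwartz, Lemma~\ref{AW-Weyl_connection} read as the convolution identity
$$
b\ast G_{\eps/2} = \sum_{\ell=0}^{L-1}\frac{\eps^\ell}{4^\ell \ell!}\Delta^\ell b + \eps^L r_L^\eps(b)
$$
expands the Anti-Wick symbol to arbitrary order $L\ge 1$. Using (\ref{moyal_prod}) and bilinearity of the Moyal bracket I obtain
$$
[\opwe(a),\opaw(b)] = \sum_{\ell=0}^{L-1} \tfrac{\eps^\ell}{4^\ell \ell!}\,\opwe\bigl(a\sharp\Delta^\ell b - \Delta^\ell b\sharp a\bigr) + \eps^L [\opwe(a), \opwe(r_L^\eps(b))].
$$
The Moyal products are well defined even though $a$ only has subquadratic growth, because the second factor is Schwartz.

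Next I apply the Moyal commutator expansion~(\ref{eq:sharp}) to each term $a\sharp\Delta^\ell b - \Delta^\ell b\sharp a$. Since $\Theta_0(a,c)=0$ for any $c$ (it is the antisymmetrization of pointwise multiplication), the series starts at $m=1$, so division by $\eps$ leaves no singularity:
$$
\tfrac{i}{\eps}\bigl(a\sharp\Delta^\ell b - \Delta^\ell b\sharp a\bigr) = \sum_{m=1}^{M-1} \eps^{m-1}\, i\Theta_m(a, \Delta^\ell b) + \eps^{M-1}\, iR_M^\eps(a,\Delta^\ell b).
$$
Multiplying by the prefactor $\eps^\ell/(4^\ell \ell!)$ and collecting the coefficient of $\eps^k$ forces $\ell + (m-1) = k$, i.e.\ $\ell + m = k+1$, which reproduces exactly the stated formula for $\theta_k(a,b)$.

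The main technical point is packaging the three residual terms---the Moyal tail of order $M$, the convolution tail $\eps^L r_L^\eps(b)$, and the commutator of $\opwe(a)$ with the latter---into a single remainder $\eps^n\opwe(\varrho_n^\eps(a,b))$ with uniformly bounded operator norm. For this I would choose $L\ge n$ and $M$ large enough that every uncollected term carries a factor $\eps^n$. The uniform bounds on $r_L^\eps$ and $R_M^\eps$ supplied by Lemma~\ref{AW-Weyl_connection} and by the Moyal calculus, combined with the Calderon-Vaillancourt theorem applied to the Schwartz symbols that appear, then deliver the estimate. Verifying that subquadratic growth of $a$ is compatible with the composition and remainder estimates at each stage is the step I expect to demand the most care.

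For essential self-adjointness, I would note that each $\theta_k(a,b)$ is Schwartz: in every summand $\Theta_m(a,\Delta^\ell b)$ with $m\ge 1$ (the only nonvanishing ones, cf.\ (\ref{moyal_expansion})), each product carries at least one derivative of $a$, so subquadratic growth becomes at most linear, paired with derivatives of the Schwartz function $b$. The product of a polynomially bounded factor with a Schwartz factor is Schwartz, hence $\opwe(\theta_k(a,b))$ is bounded by Calderon-Vaillancourt. The finite sum plus the bounded remainder $\eps^n\opwe(\varrho_n^\eps(a,b))$ is therefore a bounded symmetric operator on $L^2(\R^d)$, so $\mathcal S(\R^d)$ is trivially a core for its self-adjoint closure.
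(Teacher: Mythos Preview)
Your proposal is correct and follows essentially the same route as the paper: expand $\opaw(b)$ via Lemma~\ref{AW-Weyl_connection}, apply the Moyal commutator expansion~(\ref{eq:sharp}) to each resulting Weyl commutator, use $\Theta_0=0$ to absorb the $1/\eps$, and collect terms with $\ell+m=k+1$. The only cosmetic differences are that the paper chooses the truncation orders more tightly ($L=n+1$ and Moyal order $n-k+1$ for the $k$th term) and argues essential self-adjointness directly from the fact that $\opaw(b)$ maps $L^2$ into $\mathcal S(\R^d)$, whereas you deduce it a posteriori from boundedness of the expansion---both are fine.
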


\begin{proof}
Since $b$ is a Schwartz function, $\opaw(b)$ maps $L^2(\R^d)$ into $\mathcal{S}(\R^d)$ and hence the commutator with
$\opwe(a)$ is densely defined and essentially selfadjoint.
By Lemma~\ref{AW-Weyl_connection}, 
\begin{eqnarray*}
\lefteqn{\frac{i}{\eps}\left[\opwe(a),\opaw(b)\right]}\\
&=&
\frac{i}{\eps}\sum_{k=0}^{n}\frac{\eps^k}{4^k k!}
\left[\opwe(a),\opwe(\Delta^k b)\right]
+ i\,\eps^n\left[\opwe(a),\opwe(r^\eps_{n+1}(b))\right].
\end{eqnarray*}
Using the Moyal expansion~(\ref{eq:sharp}), we then obtain
\begin{eqnarray*}
\lefteqn{\frac{i}{\eps}\left[\opwe(a),\opaw(b)\right]}\\ 
&=&
\frac{i}{\eps}\sum_{k=0}^n \sum_{m=0}^{n-k} \frac{\eps^{k+m}}{4^k k!}
\opwe\left(\Theta_m(a,\Delta^k b)\right)\\
&+&i\,\eps^n \left(\sum_{k=0}^{n}\frac{1}{4^k k!}\opwe\left(R^\eps_{n-k+1}(a,\Delta^k b)\right)
+\opwe\left( a\sharp r^\eps_{n+1}(b)-r^\eps_{n+1}(b)\sharp a\right)\right)\\
&=:&
\frac{i}{\eps}\sum_{k=0}^n \sum_{m=0}^{n-k} \eps^{k+m}\frac{\eps^{k+m}}{4^k k!}
\opwe\left(\Theta_m(a,\Delta^k b)\right) + \eps^n\opwe(\varrho^\eps_n(a,b)).
\end{eqnarray*}
with some family of Schwartz functions $(\varrho^\eps_n(a,b))_{\eps>0}$.
Finally, we use the fact that $\Theta_{0}(a,b)=0$ to rewrite
\begin{eqnarray*}
\frac{i}{\eps}\sum_{k=0}^n \sum_{m=0}^{n-k} \frac{\eps^{k+m}}{4^k k!} \Theta_m(a,\Delta^k b)
&=&
\frac{i}{\eps}\sum_{p=0}^{n} \eps^p  \sum_{k+m=p} \frac{1}{4^k k!} \Theta_m(a,\Delta^k b)\\
&=&
i\sum_{p=0}^{n-1} \eps^p  \sum_{k+m=p+1} \frac{1}{4^k k!} \Theta_m(a,\Delta^k b).  
\end{eqnarray*}
\end{proof}

\begin{rem}
We note that $\theta_0(a,b)=\{a,b\}$, where $\{a,b\}=J\nabla a\cdot \nabla b$ denotes the Poisson bracket. Since $\theta_1(a,b)=\frac14 \{a,\Delta b\}$, we have 
$$
\frac{i}{\eps}\left[\opwe(a),\opaw(b)\right] = \opwe(\{a,b+\tfrac{\eps}{4}\Delta b\}) + \eps^2\opwe(\varrho^\eps_2(a,b)).
$$
\end{rem}

%

\section{Propagation}\label{sec:propagation}

In this section we apply the systematic transition between Weyl- and Anti-Wick calculus to prove a second order Egorov approximation for Anti-Wick quantized symbols. We can allow for unitary time evolutions 
$e^{-iHt/\eps}$ whose Hamiltonian $H=\opwe(h)$ is obtained from a smooth symbol $h:\R^{2d}\to\R$ of subquadratic growth. That is, for all $|\gamma|\ge2$ there exists $C_\gamma>0$ such that  
$$
\|\partial^\gamma h\|_\infty < C_\gamma.
$$ 

In this situation, $H$ is essentially self-adjoint in $L^2(\R^d)$ with core ${\mathcal S}(\R^{d})$, $e^{-iHt/\eps}$ is a well-defined unitary operator on $L^2(\R^d)$, and the Hamiltonian flow $\Phi^t:\R^{2d}\to\R^{2d}$ associated with $h$ is a smooth mapping for all $t\in\R$. Schr\"odinger operators 
$$
-\tfrac{\eps^2}{2}\Delta+V
$$ 
with smooth subquadratic potential $V$ are one important example. 

\subsection{A first Egorov type approximation}

The proof of the Egorov type approximation is mainly built on the following commutator estimate: 
\begin{lem}\label{commutator}
Let $\eps>0$. Let $b,c:\R^{2d}\to\R$ be a smooth function of subquadratic growth and a Schwartz function, respectively. 
Then, $\frac{i}{\eps} \left[ \opwe(b),\opaw(c)\right]$ is essentially self-adjoint in $L^2(\R^d)$ with core ${\mathcal S}(\R^{d})$, 
and there exists a family of Schwartz functions $\chi_2^\eps(b,c):\R^{2d}\to\R$ with $\sup_{\eps>0}\|\opwe(\chi^\eps_2(b,c))\|_{\mathcal{L}(L^2)}<\infty$ such that
\begin{eqnarray*}
\lefteqn{\frac{i}{\eps} \left[ \opwe(b),\opaw(c)\right]}\\
&=& 
\opaw\left( \{b-\tfrac{\eps}{4} \Delta b,c\} - \tfrac{\eps}{2}\,{\rm tr}(J\, D^2b\, D^2c)\right) +\eps^2 \opwe(\chi^\eps_2(b,c)).
\end{eqnarray*}
\end{lem}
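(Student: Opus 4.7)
The essential self-adjointness assertion comes free from Lemma~\ref{lem:com} (whose proof already established it from the fact that $\opaw(c)$ maps $L^2$ into $\mathcal{S}$ when $c$ is Schwartz). For the expansion, the strategy is to first invoke the Remark following Lemma~\ref{lem:com} to put the commutator in Weyl form modulo $\eps^2$, and then convert the Weyl symbol back to Anti-Wick form via Lemma~\ref{deconvolution}, using the Leibniz rule for $\Delta\{b,c\}$ to rearrange derivatives.

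Concretely, I would begin with
$$
\frac{i}{\eps}[\opwe(b),\opaw(c)] = \opwe\bigl(\{b,\,c+\tfrac{\eps}{4}\Delta c\}\bigr) + \eps^2 \opwe(\varrho^\eps_2(b,c)),
$$
applying (\ref{We_second}) to the leading Weyl symbol $f = \{b,c\}+\tfrac{\eps}{4}\{b,\Delta c\}$:
$$
\opwe(f) = \opaw\bigl(f - \tfrac{\eps}{4}\Delta f\bigr) + \eps^2\opwe(\rho_2^\eps(f)).
$$
Expanding $f - \tfrac{\eps}{4}\Delta f$ produces the three first-order terms $\{b,c\} + \tfrac{\eps}{4}\{b,\Delta c\} - \tfrac{\eps}{4}\Delta\{b,c\}$ plus a term $-\tfrac{\eps^2}{16}\Delta\{b,\Delta c\}$ of order~$\eps^2$.

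The algebraic heart of the argument is the Leibniz-type identity
$$
\Delta\{b,c\} = \{\Delta b, c\} + \{b, \Delta c\} + 2\,\tr(J\,D^2 b\, D^2 c),
$$
which follows by writing $\{b,c\} = \sum_{\alpha\beta} J_{\alpha\beta}(\partial_\alpha b)(\partial_\beta c)$, differentiating twice with respect to each coordinate, and recognizing the mixed second-derivative double sum as a Frobenius inner product with $J$. Substituting this identity into the previous line collapses the two Poisson-bracket correction terms and yields precisely
$$
\{b,c\} + \tfrac{\eps}{4}\{b,\Delta c\} - \tfrac{\eps}{4}\Delta\{b,c\} = \{b - \tfrac{\eps}{4}\Delta b,\,c\} - \tfrac{\eps}{2}\,\tr(J\,D^2 b\, D^2 c),
$$
which is the announced Anti-Wick symbol.

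It remains to argue that the left-over terms really constitute an $\eps^2$ remainder in operator norm. The Schwartz term $-\tfrac{\eps^2}{16}\Delta\{b,\Delta c\}$ is absorbed by converting its Anti-Wick quantization back to Weyl form through $\opaw(g) = \opwe(g*G_{\eps/2})$; since $g*G_{\eps/2}$ has all derivatives bounded uniformly in $\eps$, the Calderon--Vaillancourt theorem (as invoked in the proofs of Lemmas~\ref{AW-Weyl_connection} and~\ref{deconvolution}) gives the required operator-norm bound, and this contribution is combined with $\opwe(\varrho_2^\eps(b,c))$ and $\opwe(\rho_2^\eps(f))$ into a single Schwartz family $\chi_2^\eps(b,c)$. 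The main obstacle is purely bookkeeping: keeping track of the $\eps$-order of every term produced by expanding $f - \tfrac{\eps}{4}\Delta f$ and verifying the sign in the Leibniz identity relating $\Delta\{b,c\}$ to the trace of $J\,D^2b\,D^2c$; once this identity is in hand the rest of the proof is a direct matching of symbols.
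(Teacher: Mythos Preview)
Your proposal is correct and follows essentially the same route as the paper's proof: invoke the Remark after Lemma~\ref{lem:com} to write the commutator in Weyl form modulo $\eps^2$, pass back to Anti-Wick via Lemma~\ref{deconvolution}, and use the Leibniz identity $\Delta\{b,c\}=\{\Delta b,c\}+\{b,\Delta c\}+2\,\tr(J\,D^2b\,D^2c)$ to recombine the first-order terms. The only cosmetic difference is that the paper converts the stray $\opaw(\Delta\{b,\Delta c\})$ term back to Weyl via Lemma~\ref{AW-Weyl_connection} with $n=1$, whereas you invoke the convolution formula directly; these are equivalent.
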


\begin{proof}
Since $c$ is a Schwartz function, $\opaw(c)$ maps $L^2(\R^d)$ into ${\mathcal S}(\R^d)$, and the commutator with $\opwe(b)$ is well-defined. 
By Lemma~\ref{lem:com}, 
$$
\frac{i}{\eps}\left[\opwe(b),\opaw(c)\right]= \opwe(\{b,c+\tfrac{\eps}{4}\Delta c\}) + \eps^2\opwe(\varrho_2^\eps(b,c))
$$
By Lemma~\ref{deconvolution}, 
\begin{eqnarray*}
\opwe(\{b,c+\tfrac{\eps}{4}\Delta c\}) &=& \opaw(\{b,c+\tfrac{\eps}{4}\Delta c\} - \tfrac\eps4\Delta\{b,c\})\\
&&
+\,\eps^2\opwe\left(\rho_2^\eps(\{b,c+\tfrac{\eps}{4}\Delta c\}\right)-\tfrac{\eps^2}{16}\opaw(\Delta\{b,\Delta c\}).
\end{eqnarray*}
By Lemma~\ref{AW-Weyl_connection},
$$
\opaw(\Delta\{b,\Delta c\})=\opwe(\Delta\{b,\Delta c\})+\eps\opwe(r^\eps_1(\Delta\{b,\Delta c\})),
$$
and therefore 
$$
\frac{i}{\eps}\left[\opwe(b),\opaw(c)\right]=\opaw(\{b,c+\tfrac{\eps}{4}\Delta c\} - \tfrac\eps4\Delta\{b,c\})+\eps^2\opwe(\chi^\eps_2(b,c))
$$
with
$$
\chi^\eps_2(b,c)=\varrho_2^\eps(b,c)+\rho_2^\eps(\{b,c+\tfrac{\eps}{4}\Delta c\})-\tfrac{1}{16}\Delta\{b,\Delta c\}-\tfrac{\eps}{16} r^\eps_1(\Delta\{b,\Delta c\}).
$$
a Schwartz function. We write $\Delta\{b,c\}= \{\Delta b,c\} + \{b,\Delta c\} + 2\sum_{k=1}^{2d} \{\partial_{z_k}b,\partial_{z_k}c\}$ and compute 
\begin{eqnarray*}
\sum_{k=1}^{2d} \{\partial_{z_k}b,\partial_{z_k}c\} &=& 
\sum_{k=1}^{2d} J\,D^2b(:,k) \cdot D^2c(:,k) = 
\sum_{k=1}^{2d} D^2 c(k,:) (J\,D^2 b)(:,k)\\
&=& {\rm tr}(J\,D^2 b\,D^2 c),
\end{eqnarray*}
where $M(:,k)$ and $M(k,:)$ denote the $k$th column and the $k$th row of a matrix~$M$, respectively. Therefore,
$$
\{b,c+\tfrac{\eps}{4}\Delta c\} - \tfrac\eps4\Delta\{b,c\}=\{b-\tfrac{\eps}{4} \Delta b,c\} - \tfrac{\eps}{2}\,{\rm tr}(J\, D^2b\, D^2c),
$$
which concludes the proof.
\end{proof}

\begin{rem} 
If $b$ is a polynomial of degree one, then the result of Lemma~\ref{commutator} obviously simplifies to 
$$
\tfrac{i}{\eps} \left[ \opwe(b),\opaw(c)\right]=\opaw\left(\{b,c\}\right).
$$ 
If $b$ is a polynomial of degree two, then the only simplification is due to $\{b-\frac\eps4\Delta b,c\}=\{b,c\}$, since $\chi^\eps_2(b,c)$ is not identical zero.  
\end{rem}

Having prepared the necessary estimates, we state and prove our approximation result for the unitary propagation of Anti-Wick operators. 

\begin{thm}\label{husimi_egorov}
Let $h:\R^{2d}\to\R$ be a smooth function of subquadratic growth. Let $a:\R^{2d}\to\R$ be a Schwartz function and $t\in\R$. There exists a constant $C=C(a,h,t)>0$ such that all $\eps>0$ 
$$
\left\|e^{iHt/\eps}\opaw(a)e^{-iHt/\eps} - 
\opaw(a \circ \Phi^t_\eps -\tfrac{\eps}{2}\, \Xi^t_\eps(a)) \right\|_{\mathcal{L}(L^2)} \le C \eps^2 
$$
with 
\begin{equation}
\label{correction-flow} 
\Xi^t_\eps(a) = \int_0^t {\rm tr}\left(J\,D^2h\, D^2(a\circ\Phi^\tau_\eps) \right) \circ \Phi^{t-\tau}_\eps~d\tau
\end{equation}
and $\Phi^t_\eps:\R^{2d}\to\R^{2d}$ the Hamiltonian flow associated with $h_\eps=h-\frac\eps4 \Delta h$.
\end{thm}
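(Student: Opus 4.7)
The plan is a Duhamel (Heisenberg-picture) interpolation between the exact and approximate evolutions, in the spirit of the classical Egorov proof. For $s\in[0,t]$ I would define
$$
D(s) \;=\; e^{iHs/\eps}\,\opaw(c^{t-s})\,e^{-iHs/\eps},\qquad c^u \;:=\; a\circ\Phi^u_\eps \;-\;\tfrac{\eps}{2}\,\Xi^u_\eps(a).
$$
Since $\Xi^0_\eps(a)=0$ we have $c^0=a$, so $D(t)=e^{iHt/\eps}\opaw(a)e^{-iHt/\eps}$ while $D(0)=\opaw(c^t)$ is exactly the approximant in the theorem. The unitarity of $e^{\pm iHs/\eps}$ reduces everything to showing that
$$
\|D'(s)\| \;=\; \Bigl\|\tfrac{i}{\eps}[\opwe(h),\opaw(c^{t-s})]\;-\;\opaw\!\bigl(\partial_u c^u|_{u=t-s}\bigr)\Bigr\| \;=\; O(\eps^2),
$$
uniformly in $s\in[0,t]$, and then integrating in $s$ to obtain $\|D(t)-D(0)\|\le\int_0^t\|D'(s)\|\,ds=O(\eps^2)$.

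The next step is to compute $\partial_u c^u$ explicitly. The chain rule together with $\partial_u\Phi^u_\eps=J\nabla h_\eps\circ\Phi^u_\eps$ gives $\partial_u(a\circ\Phi^u_\eps)=\{h_\eps,a\circ\Phi^u_\eps\}$, where I use the convention $\{f,g\}=J\nabla f\cdot\nabla g$ fixed in the paper. Differentiating $\Xi^u_\eps(a)$ in both its upper limit and its inner $\Phi^{u-\tau}_\eps$ factor, and pulling $\{h_\eps,\cdot\}$ through the composition (valid because $h_\eps\circ\Phi^u_\eps=h_\eps$ and $\Phi^u_\eps$ is symplectic), yields
$$
\partial_u\,\Xi^u_\eps(a) \;=\; \tr\!\bigl(J\,D^2h\,D^2(a\circ\Phi^u_\eps)\bigr) \;+\; \{h_\eps,\Xi^u_\eps(a)\}.
$$
Combining gives
$$
\partial_u c^u \;=\; \{h_\eps,c^u\} \;-\; \tfrac{\eps}{2}\,\tr\!\bigl(J\,D^2h\,D^2(a\circ\Phi^u_\eps)\bigr).
$$
On the operator side, Lemma~\ref{commutator} with $b=h$ and $c=c^u$ gives
$$
\tfrac{i}{\eps}\bigl[\opwe(h),\opaw(c^u)\bigr] \;=\; \opaw\!\Bigl(\{h_\eps,c^u\}-\tfrac{\eps}{2}\tr(J\,D^2h\,D^2c^u)\Bigr) \;+\;\eps^2\,\opwe(\chi^\eps_2(h,c^u)).
$$
Because $c^u=a\circ\Phi^u_\eps+O(\eps)$ in every symbol seminorm, replacing $D^2c^u$ by $D^2(a\circ\Phi^u_\eps)$ inside the $\frac{\eps}{2}\tr(\cdot)$ term costs only $O(\eps^2)$, and comparison with $\partial_u c^u$ above leaves exactly an $\opwe$ of a symbol of size $\eps^2$. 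This proves the pointwise-in-$s$ estimate $\|D'(s)\|=O(\eps^2)$.

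The hard part, and where the bookkeeping lives, is ensuring that these $O(\eps^2)$ remainders are uniform in both $s\in[0,t]$ and $\eps\in(0,1]$. This reduces to proving $\eps$-uniform bounds on all symbol seminorms of $c^u$, i.e.\ of $a\circ\Phi^u_\eps$ and of $\Xi^u_\eps(a)$, for $u\in[0,t]$. Those in turn rest on the subquadratic bound on $h$, which also controls $h_\eps=h-\frac{\eps}{4}\Delta h$ uniformly in $\eps$: a Gronwall argument then gives $\eps$-uniform, at worst exponentially-in-$u$ controlled bounds on all derivatives of $\Phi^u_\eps$, together with the coercive lower bound $|\Phi^u_\eps(z)|\gtrsim|z|$ for large $|z|$, so composition with the Schwartz function $a$ preserves Schwartz class with uniform seminorms on $[0,t]$. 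Once this uniformity is established, the Calderon--Vaillancourt bound built into Lemma~\ref{commutator} yields $\|\opwe(\chi^\eps_2(h,c^u))\|_{\mathcal{L}(L^2)}\le C(a,h,t)$, and integrating $\|D'(s)\|$ over $[0,t]$ closes the estimate with a constant $C(a,h,t)$ that absorbs the Gronwall growth.
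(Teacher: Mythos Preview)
Your proposal is correct and follows essentially the same route as the paper: the same Duhamel interpolation $s\mapsto e^{iHs/\eps}\opaw(c^{t-s})e^{-iHs/\eps}$, the same application of Lemma~\ref{commutator} to the commutator, and the same computation of $\partial_u c^u$ via $\partial_u\Xi^u_\eps(a)=\tr(J\,D^2h\,D^2(a\circ\Phi^u_\eps))+\{h_\eps,\Xi^u_\eps(a)\}$, so that the difference is exactly $\tfrac{\eps^2}{4}\opaw\tr(J\,D^2h\,D^2\Xi^{t-s}_\eps(a))+\eps^2\opwe(\chi^\eps_2)$. Your added discussion of the $\eps$-uniform Schwartz seminorm bounds on $a\circ\Phi^u_\eps$ and $\Xi^u_\eps(a)$ via Gronwall on the subquadratic $h_\eps$ is more explicit than the paper's terse closing sentence, but the argument is the same.
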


\begin{proof}
We denote 
$$
a(t)=a \circ \Phi^t_\eps -\tfrac{\eps}{2}\, \Xi^t_\eps(a)
$$ 
and observe that $a(t):\R^{2d}\to\R$ is a Schwartz function. Since $a(0)=a$, we have
\begin{eqnarray*}
&&e^{iHt/\eps}\opaw(a)e^{-iHt/\eps} - \opaw(a(t))\\
&&= \int_0^t \frac{d}{ds}\left(e^{iHs/\eps}\opaw(a(t-s))e^{-iHs/\eps}\right) ds\\
&&= \int_0^t e^{iHs/\eps} \left(\frac{i}{\eps} \left[H,\opaw(a(t-s))\right] - \opaw(\partial_ta(t-s))\right) e^{-iHs/\eps}ds.
\end{eqnarray*}
The second order commutator expansion of Lemma~\ref{commutator} yields
\begin{eqnarray*}
\lefteqn{\frac{i}{\eps} \left[H,\opaw(a(t-s))\right]=\frac{i}{\eps} \left[\opwe(h),\opaw(a(t-s))\right]=}\\
&&
\opaw\left( \{h_\eps,a(t-s)\} - \tfrac{\eps}{2}\,{\rm tr}(J\, D^2h\, D^2a(t-s))\right) +\eps^2 \opwe(\rho^\eps_2(h,a(t-s))).
\end{eqnarray*}
Now, we compute
$$
\partial_t(a\circ\Phi^{t-s}_\eps)=\{h_\eps,a\circ\Phi^{t-s}_\eps\}
$$
and 
$$
\partial_t\, \Xi^{t-s}_\eps(a) = {\rm tr}\left(J\,D^2h\, D^2(a\circ\Phi^{t-s}_\eps) \right) + \{h_\eps,\Xi^{t-s}_\eps(a)\}.
$$
Therefore, 
$$
\partial_ta(t-s)=\{h_\eps,a(t-s)\}-\tfrac{\eps}{2}\,{\rm tr}\left(J\,D^2h\, D^2(a\circ\Phi^{t-s}_\eps) \right)
$$
and
\begin{eqnarray*}
\lefteqn{\frac{i}{\eps} \left[H,\opaw(a(t-s))\right] - \opaw(\partial_ta(t-s))=}\\
&& 
\tfrac{\eps^2}{4} \opaw{\rm tr}(J\, D^2h\, D^2 \Xi^{t-s}_\eps(a)) + \eps^2 \opwe(\rho^\eps_2(h,a(t-s))). 
\end{eqnarray*}
The observation that 
$$
{\rm tr}(J\, D^2h\, D^2 \Xi^{t-s}_\eps(a)),\;\rho^\eps_2(h,a(t-s))\in{\mathcal S}(\R^{2d})
$$ 
concludes the proof.
\end{proof}

\subsection{Corrections by ordinary differential equations}

Our next task is to reformulate the time evolution of the correction term 
$$
\Xi_\eps^t(a) = 
\int_0^t {\rm tr}\left(J\, D^2h\, D^2(a\circ\Phi_\eps^\tau) \right) \circ \Phi_\eps^{t-\tau}~d\tau
$$
using ordinary differential equations which are independent of the observable~$a$. 

\begin{lem}\label{lem:lambda_gamma} 
Let $a:\R^{2d}\to\R$ be a Schwartz function and $h:\R^{2d}\to\R$ a smooth function of subquadratic growth. Let $\eps>0$ and $\Phi^t_\eps:\R^{2d}\to\R^{2d}$ the Hamiltonian 
flow associated with $h_\eps=h-\frac\eps4\Delta h$. Then, for all $t\in\R$,
$$
\Xi_\eps^t(a) = 
{\rm tr}\left(\widetilde\Lambda^t_\eps \,(D^2a\circ\Phi^t_\eps)\right)
+\widetilde\Gamma^t_\eps \cdot (\nabla a\circ \Phi^t_\eps) 
$$
with $\widetilde\Lambda^t_\eps:\R^{2d}\to\R^{2d\times 2d}$, 
\begin{equation}\label{eq:lambda}
\widetilde\Lambda^t_\eps= \int_0^t \left( (D\Phi^\tau_\eps)^\T\, J\,D^2h\, D\Phi^\tau_\eps \right)\circ\Phi^{t-\tau}_\eps d\tau,
\end{equation}
and $\widetilde\Gamma^t_\eps:\R^{2d}\to\R^{2d}$
\begin{equation}\label{eq:gamma}
\widetilde\Gamma^t_\eps= \int_0^t \sum_{k,l=1}^{2d} \left((J\,D^2h)_{kl}\,\partial^2_{z_k z_l}\Phi_\eps^\tau \right)\circ\Phi^{t-\tau}_\eps d\tau,
\end{equation}
where the Jacobian of $\Phi^\tau_\eps$ is denoted as $D\Phi^\tau_\eps=(\nabla(\Phi^\tau_\eps)_1,\ldots,\nabla(\Phi^\tau_\eps)_{2d})$. 
\end{lem}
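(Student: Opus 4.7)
The plan is a direct chain-rule computation. The key observation is that $D^2(a\circ \Phi^\tau_\eps)$ splits into two natural pieces, one of which picks up second derivatives of $a$ sandwiched by Jacobians of $\Phi^\tau_\eps$, and one of which picks up first derivatives of $a$ multiplied by second derivatives of $\Phi^\tau_\eps$. These two pieces will produce the $\widetilde\Lambda^t_\eps$ and $\widetilde\Gamma^t_\eps$ contributions respectively. Throughout, I denote $\Phi=\Phi^\tau_\eps$ and use the convention $(D\Phi)_{ij}=\partial_{z_i}\Phi_j$ of the statement.

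First I would compute, using the chain rule twice and symmetry of mixed partials,
\begin{equation*}
\partial^2_{z_k z_l}(a\circ\Phi) = \sum_{i,j} (\partial_i\partial_j a)(\Phi)\,(\partial_{z_k}\Phi_i)(\partial_{z_l}\Phi_j) + \sum_i (\partial_i a)(\Phi)\,\partial^2_{z_k z_l}\Phi_i,
\end{equation*}
which in matrix form reads
\begin{equation*}
D^2(a\circ\Phi) = D\Phi\,(D^2 a\circ\Phi)\,(D\Phi)^\T + \sum_{i=1}^{2d}(\partial_i a)(\Phi)\,D^2\Phi_i.
\end{equation*}

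Next I would plug this decomposition into the integrand of $\Xi^t_\eps(a)$. By linearity and the cyclic invariance of the trace,
\begin{equation*}
{\rm tr}\bigl(J\,D^2 h\,D\Phi\,(D^2 a\circ\Phi)\,(D\Phi)^\T\bigr) = {\rm tr}\bigl((D\Phi)^\T J\,D^2 h\,D\Phi \cdot (D^2 a\circ\Phi)\bigr),
\end{equation*}
while the second piece, after expanding the trace entrywise, gives
\begin{equation*}
{\rm tr}\Bigl(J\,D^2 h\sum_i(\partial_i a)(\Phi)D^2\Phi_i\Bigr) = \sum_i(\partial_i a)(\Phi)\sum_{k,l}(J\,D^2 h)_{kl}\,\partial^2_{z_k z_l}\Phi_i,
\end{equation*}
matching exactly the integrand defining $\widetilde\Gamma^t_\eps$.

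Finally I would use the group property $\Phi^\tau_\eps\circ\Phi^{t-\tau}_\eps=\Phi^t_\eps$ to pull the $a$-derivatives outside the $\tau$-integral. Concretely, $((D^2 a\circ\Phi^\tau_\eps)\circ\Phi^{t-\tau}_\eps)(z)=(D^2 a\circ\Phi^t_\eps)(z)$ is independent of $\tau$, so it can be factored out of the trace integral, yielding the ${\rm tr}(\widetilde\Lambda^t_\eps\,(D^2 a\circ\Phi^t_\eps))$ term; an identical pull-out applies to $(\nabla a\circ\Phi^\tau_\eps)\circ\Phi^{t-\tau}_\eps=\nabla a\circ\Phi^t_\eps$ in the $\widetilde\Gamma^t_\eps$ term. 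Adding the two contributions recovers $\Xi^t_\eps(a)$ as claimed.

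The only potential pitfall is bookkeeping between the paper's column-wise convention for $D\Phi^\tau_\eps$ and the standard Jacobian convention: one must verify that the matrix identity above is stated with the correct placement of transposes so that ${\rm tr}(J\,D^2 h\,D^2(a\circ\Phi^\tau_\eps))$ really reshuffles into $(D\Phi^\tau_\eps)^\T J\,D^2 h\,D\Phi^\tau_\eps$ contracted against $D^2 a\circ\Phi^\tau_\eps$. Once this is settled, the rest of the argument is mechanical and relies only on the chain rule, the cyclic property of the trace, and the flow identity.
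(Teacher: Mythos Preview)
Your proof is correct and follows essentially the same approach as the paper: both arguments hinge on the chain-rule identity for $\partial^2_{z_k z_l}(a\circ\Phi^\tau_\eps)$, a cyclic trace manipulation, and the flow group property $\Phi^\tau_\eps\circ\Phi^{t-\tau}_\eps=\Phi^t_\eps$ (which the paper uses implicitly when writing $D^2a\circ\Phi^t_\eps$ inside the $\tau$-integral). The only cosmetic difference is that the paper argues backwards from the claimed $\widetilde\Lambda^t_\eps$ and $\widetilde\Gamma^t_\eps$ expressions, while you argue forwards from $\Xi^t_\eps(a)$.
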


\begin{proof}
We compute
$$
\partial^2_{z_k z_l}(a\circ\Phi_\eps^\tau) = (D^2a \circ \Phi_\eps^\tau)(\partial_{z_l}\Phi_\eps^\tau,\partial_{z_k}\Phi_\eps^\tau) + 
(\nabla a\circ \Phi_\eps^\tau)\cdot \partial^2_{z_k z_l}\Phi_\eps^\tau.
$$
Then,
\begin{eqnarray*}
\lefteqn{{\rm tr}\left(\widetilde\Lambda^t_\eps \,(D^2a\circ\Phi^t_\eps)\right)=\int_0^t {\rm tr}\left( (D\Phi^\tau_\eps)^\T\, J\,D^2h\, D\Phi^\tau_\eps \,(D^2a\circ\Phi^\tau_\eps) \right)\circ\Phi^{t-\tau}_\eps d\tau}\\
&=&
\int_0^t {\rm tr}\left( J\,D^2h\, D\Phi^\tau_\eps (D^2a\circ\Phi^\tau_\eps)\,(D\Phi^\tau_\eps)^\T \right)\circ\Phi^{t-\tau}_\eps d\tau\\
&=&
\sum_{k,l=1}^{2d} \int_0^t \left( (J\,D^2h)_{kl} (D^2a \circ \Phi_\eps^\tau)(\partial_{z_l}\Phi_\eps^\tau,\partial_{z_k}\Phi_\eps^\tau)\right)\circ\Phi^{t-\tau}_\eps d\tau
\end{eqnarray*}
and
\begin{eqnarray*}
\widetilde\Gamma^t_\eps \cdot (\nabla a\circ \Phi^t_\eps)  = 
\int_0^t \sum_{k,l=1}^{2d} \left((J\,D^2h)_{kl}\,\partial^2_{z_k z_l}\Phi_\eps^\tau \cdot (\nabla a\circ \Phi^\tau_\eps) \right)\circ\Phi^{t-\tau}_\eps d\tau.
\end{eqnarray*}
Adding the two terms gives the claimed identity.
\end{proof}

Having written the correction term $\Xi_\eps^t(a)$ as the sum of a trace and an inner product, we can derive the desired $a$-independent ordinary differential equations for the time evolution of $\Xi_\eps^t(a)$. 

\begin{thm}\label{corrected_flow-equation}
Let $a:\R^{2d}\to\R$ be a Schwartz function, $h:\R^{2d}\to\R$ a smooth function of subquadratic growth, and $t\in\R$. Then, there exists a constant $C=C(a,h,t)>0$ such that for all $\eps>0$
$$
\left\|e^{iHt/\eps}\opaw(a)e^{-iHt/\eps} - \opaw(\Psi_\eps^t(a))\right\|_{\mathcal{L}(L^2)} \le C\eps^2
$$
with
$$
\Psi_\eps^t(a) = a\circ\Phi_\eps^t - \tfrac{\eps}{2} \left( {\rm tr}(\Lambda^t_\eps \,(D^2a\circ\Phi^t_\eps)) + \Gamma_\eps^t \cdot(\nabla a\circ \Phi_\eps^t)\right),
$$
where $\Phi^t_\eps:\R^{2d}\to\R^{2d}$ is the Hamiltonian flow of $h_\eps=h-\frac\eps4\Delta h$, 
\begin{equation}\label{eq:ham}
\partial_t\Phi^t_\eps=J\,\nabla h_\eps\circ\Phi^t_\eps,
\end{equation}
and $\Lambda_\eps^t:\R^{2d}\to\R^{2d\times 2d}$, $\Gamma^t_\eps:\R^{2d}\to\R^{2d}$ solve 
\begin{eqnarray}\label{lyapunov}
\partial_t\Lambda_\eps^t &=& M_\eps(t) +  M_\eps(t)\, \Lambda_\eps^t + 
\Lambda_\eps^t\,M_\eps(t)^{\rm T},\qquad\Lambda_\eps^0=0\\*[1.5ex]
\label{gamma-equation}
\partial_t\Gamma_\eps^t &=&  M_\eps(t)\, \Gamma_\eps^t + {\rm tr}(C_i(t)^{\rm T}\, \Lambda_\eps^t)_{i=1}^{2d},\qquad\Gamma_\eps^0= 0
\end{eqnarray}
with 
\begin{eqnarray*}
M_\eps(t):\R^{2d}\to\R^{2d\times 2d},&& M_\eps(t)=J\, D^2h \circ \Phi_\eps^t,\\
C_i(t):\R^{2d}\to\R^{2d\times 2d},&& (C_i(t))_{jk}=\partial_k(J\, D^2 h)_{ij}\circ\Phi_\eps^t.
\end{eqnarray*}
\end{thm}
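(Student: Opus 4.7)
The plan is to combine Theorem~\ref{husimi_egorov} with Lemma~\ref{lem:lambda_gamma} to convert the time-integral $\Xi^t_\eps(a)$ of the correction term into the closed-form expression appearing in $\Psi^t_\eps(a)$, and then to identify the integral representations $\widetilde\Lambda^t_\eps$ and $\widetilde\Gamma^t_\eps$ from (\ref{eq:lambda}) and (\ref{eq:gamma}) with the solutions $\Lambda^t_\eps$, $\Gamma^t_\eps$ of the ODEs (\ref{lyapunov}) and (\ref{gamma-equation}) up to $O(\eps)$ corrections. Any such discrepancy becomes $O(\eps^2)$ once multiplied by the prefactor $\eps/2$ and transferred to an operator-norm estimate by the Calderon-Vaillancourt bound for $\opaw$ already used in the proof of Lemma~\ref{AW-Weyl_connection}.

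Concretely, I would first invoke Lemma~\ref{lem:lambda_gamma} to write $\Xi^t_\eps(a) = \tr\bigl(\widetilde\Lambda^t_\eps\,(D^2 a\circ\Phi^t_\eps)\bigr) + \widetilde\Gamma^t_\eps \cdot (\nabla a\circ\Phi^t_\eps)$, reducing the theorem to the estimates $\Lambda^t_\eps - \widetilde\Lambda^t_\eps = O(\eps)$ and $\Gamma^t_\eps - \widetilde\Gamma^t_\eps = O(\eps)$, uniform on compact subsets of $\R^{2d}$ and on compact time intervals. To obtain an ODE for $\widetilde\Lambda^t_\eps$, I would substitute $s = t-\tau$ in (\ref{eq:lambda}) and apply the chain rule $D\Phi^\tau_\eps(\Phi^{t-\tau}_\eps(z)) = D\Phi^t_\eps(z)\,(D\Phi^{t-\tau}_\eps(z))^{-1}$, together with a cyclic rearrangement of the integrand under the trace, to factor the integral as $\widetilde\Lambda^t_\eps(z) = D\Phi^t_\eps(z) \, K^t_\eps(z) \, (D\Phi^t_\eps(z))^T$ for a suitable matrix-valued $K^t_\eps$. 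Differentiating in $t$ and substituting the first-order variational equation $\partial_t D\Phi^t_\eps = (JD^2 h_\eps)(\Phi^t_\eps)\, D\Phi^t_\eps$ then produces a Lyapunov-type identity agreeing with (\ref{lyapunov}) modulo the $O(\eps)$ discrepancy $D^2 h - D^2 h_\eps = \tfrac{\eps}{4} D^2\Delta h$.

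A parallel but more involved computation handles $\widetilde\Gamma^t_\eps$: differentiating (\ref{eq:gamma}) in $t$ requires the second-order variational equation for $D^2\Phi^t_\eps$, whose homogeneous part is again driven by $D^2 h_\eps$ and whose inhomogeneity couples $D^2\Phi^t_\eps$ to $D\Phi^t_\eps$ through $D^3 h_\eps$. This structure reproduces precisely the forcing $\tr(C_i(t)^T \Lambda^t_\eps)$ appearing in (\ref{gamma-equation}), again up to $O(\eps)$. A standard Gronwall argument for the linear matrix and vector ODEs satisfied by the differences $\Lambda^t_\eps - \widetilde\Lambda^t_\eps$ and $\Gamma^t_\eps - \widetilde\Gamma^t_\eps$ then yields the claimed $O(\eps)$-bounds.

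The main technical obstacle will be the second-order variational computation for $\widetilde\Gamma^t_\eps$: tracking $\partial_t\partial^2_{z_k z_l}\Phi^t_\eps$ along the flow involves contractions of $D^3 h_\eps$ against pairs of columns of $D\Phi^t_\eps$, and the index bookkeeping needed to recognize the resulting inhomogeneity as $\tr(C_i(t)^T \Lambda^t_\eps)$ is delicate. Once both ODE identifications are established, $\Psi^t_\eps(a) - (a\circ\Phi^t_\eps - \tfrac{\eps}{2}\,\Xi^t_\eps(a))$ is a Schwartz symbol of order $\eps^2$, and the operator-norm bound for its Anti-Wick quantization, combined with Theorem~\ref{husimi_egorov}, concludes the proof.
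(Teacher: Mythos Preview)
Your overall strategy matches the paper's proof exactly: invoke Lemma~\ref{lem:lambda_gamma}, derive ODEs for the integral quantities $\widetilde\Lambda^t_\eps$ and $\widetilde\Gamma^t_\eps$ using the first- and second-order variational equations for $\Phi^t_\eps$, compare these ODEs to (\ref{lyapunov}) and (\ref{gamma-equation}) modulo the $O(\eps)$ discrepancy $D^2h_\eps - D^2h$, and close via Gronwall so that the difference is absorbed by the $\eps/2$ prefactor.

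However, the specific route you propose for obtaining the ODE for $\widetilde\Lambda^t_\eps$ does not work as written. The cocycle identity $D\Phi^\tau_\eps(\Phi^{t-\tau}_\eps(z)) = D\Phi^t_\eps(z)\,(D\Phi^{t-\tau}_\eps(z))^{-1}$ places the $t$-dependent Jacobians \emph{inside} the product: the integrand becomes
\[
(D\Phi^{t-\tau}_\eps(z))^{-\T}\,(D\Phi^t_\eps(z))^\T\,(JD^2h)(\Phi^{t-\tau}_\eps(z))\,D\Phi^t_\eps(z)\,(D\Phi^{t-\tau}_\eps(z))^{-1},
\]
which is not of the form $D\Phi^t_\eps\,K\,(D\Phi^t_\eps)^\T$, and trace cyclicity cannot rescue this since $\widetilde\Lambda^t_\eps$ is a matrix, not a scalar. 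The paper instead differentiates the integral directly: writing $\partial_t\bigl(F(\tau)\circ\Phi^{t-\tau}_\eps\bigr) = -\partial_\tau\bigl(F(\tau)\circ\Phi^{t-\tau}_\eps\bigr) + (\partial_\tau F(\tau))\circ\Phi^{t-\tau}_\eps$ and integrating the first term by parts gives
\[
\partial_t\int_0^t F(\tau)\circ\Phi^{t-\tau}_\eps\,d\tau = F(0)\circ\Phi^t_\eps + \int_0^t (\partial_\tau F(\tau))\circ\Phi^{t-\tau}_\eps\,d\tau.
\]
Applying this with $F(\tau) = (D\Phi^\tau_\eps)^\T JD^2h\,D\Phi^\tau_\eps$ and substituting the variational equation $\partial_\tau(D\Phi^\tau_\eps)^\T = (JD^2h_\eps\circ\Phi^\tau_\eps)(D\Phi^\tau_\eps)^\T$ immediately yields the Lyapunov form $\partial_t\widetilde\Lambda^t_\eps = M_\eps(t) + (JD^2h_\eps\circ\Phi^t_\eps)\widetilde\Lambda^t_\eps + \widetilde\Lambda^t_\eps(JD^2h_\eps\circ\Phi^t_\eps)^\T$, from which your $O(\eps)$ comparison follows. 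The same differentiation formula handles $\widetilde\Gamma^t_\eps$ and in fact streamlines the second-order variational bookkeeping you flag as the main obstacle.
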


\begin{proof} We start with the reformulation of $\Xi_\eps^t(a)$ of Lemma~\ref{lem:lambda_gamma} and observe that both relations (\ref{eq:lambda}) and (\ref{eq:gamma}) for $\widetilde\Lambda_\eps^t$ and $\widetilde\Gamma^t_\eps$, respectively, involve time integrals of the form
$$
\int_0^t \left(f_1(\tau)g f_2(\tau)\right)\circ\Phi^{t-\tau}_\eps d\tau. 
$$
We compute
$$
\partial_t \int_0^t \left(f_1(\tau)g f_2(\tau)\right)\circ\Phi^{t-\tau}_\eps d\tau = 
\int_0^t \partial_t \left(\left(f_1(\tau)g f_2(\tau)\right)\circ\Phi^{t-\tau}_\eps\right) d\tau + f_1(t)g f_2(t)
$$
and
\begin{eqnarray*}
\lefteqn{\partial_t\left(\left(f_1(\tau)g f_2(\tau)\right)\circ\Phi^{t-\tau}_\eps\right)}\\
&=&
-\partial_\tau\left(\left(f_1(\tau)g f_2(\tau)\right)\circ\Phi^{t-\tau}_\eps\right) 
+\partial_\tau\left(f_1(\tau)g f_2(\tau)\right)\circ \Phi^{t-\tau}_\eps.
\end{eqnarray*}
We obtain
\begin{eqnarray}\nonumber
\lefteqn{\partial_t \int_0^t \left(f_1(\tau)g f_2(\tau)\right)\circ\Phi^{t-\tau}_\eps d\tau}\\ 
\label{eq:timeder}
&=& 
\left(f_1(0)g f_2(0)\right)\circ\Phi^{t}_\eps
+\int_0^t \partial_\tau\left(f_1(\tau)g f_2(\tau)\right)\circ \Phi^{t-\tau}_\eps d\tau.
\end{eqnarray}
This implies for 
$$
\widetilde\Lambda^t_\eps=\int_0^t \left( (D\Phi^\tau_\eps)^\T\, J\,D^2h\, D\Phi^\tau_\eps \right)\circ\Phi^{t-\tau}_\eps d\tau
$$ 
that
\begin{eqnarray*}
\partial_t \widetilde\Lambda^t_\eps &=& M_\eps(t)  + \int_0^t \left( (J\,D^2 h_\eps\circ\Phi^\tau_\eps)(D\Phi^\tau_\eps)^\T\, J\,D^2h\, D\Phi^\tau_\eps\right)\circ\Phi^{t-\tau}_\eps d\tau\\  
&&+\int_0^t \left((D\Phi^\tau_\eps)^\T\, J\,D^2h\, D\Phi^\tau_\eps (J\,D^2 h_\eps\circ\Phi^\tau_\eps)^T\right)\circ\Phi^{t-\tau}_\eps d\tau,\\
&=&
M_\eps(t) + (J\,D^2 h_\eps\circ\Phi^t_\eps)\widetilde\Lambda^t_\eps + 
\widetilde\Lambda_\eps^t (J\,D^2 h_\eps\circ\Phi^t_\eps)^\T,
\end{eqnarray*}
since the Hamilton equation (\ref{eq:ham}) implies $\partial_\tau\partial_{z_k}\Phi^\tau_\eps = \left(J\,D^2 h_\eps\circ\Phi^\tau_\eps\right) \partial_{z_k}\Phi^\tau_\eps$ and 
\begin{eqnarray*}
\partial_\tau (D\Phi^\tau_\eps)^\T &=& (J\,D^2 h_\eps\circ\Phi^\tau_\eps) (D\Phi^\tau_\eps)^\T,\\
\partial_\tau D\Phi^\tau_\eps &=& D\Phi^\tau_\eps (J\,D^2 h_\eps\circ\Phi^\tau_\eps)^\T.
\end{eqnarray*}
Since $J\,D^2 h_\eps\circ\Phi^t_\eps=M_\eps(t)+O(\eps)$, the solution $\Lambda^t_\eps$ of equation~(\ref{lyapunov}) satisfies $\Lambda^t_\eps=\widetilde\Lambda^t_\eps + O(\eps)$, and we obtain
$$
a\circ\Phi^t_\eps - \tfrac\eps2\,\Xi^t_\eps(a) = 
a\circ\Phi^t_\eps - \tfrac\eps2\left( {\rm tr}(\Lambda^t_\eps \,(D^2a\circ\Phi^t_\eps))
+\widetilde\Gamma^t_\eps \cdot (\nabla a\circ \Phi^t_\eps) \right) + O(\eps^2).
$$ 
For the time derivative of 
$$
\widetilde\Gamma^t_\eps= \int_0^t \sum_{k,l=1}^{2d} \left((J\,D^2h)_{kl}\,\partial^2_{z_k z_l}\Phi_\eps^\tau \right)\circ\Phi^{t-\tau}_\eps d\tau
$$ 
equation~(\ref{eq:timeder}) implies
\begin{eqnarray*}
\partial_t\widetilde\Gamma^t_\eps = \int_0^t \sum_{k,l=1}^{2d} \left((J\,D^2h)_{kl}\,\partial_\tau\partial^2_{z_k z_l}\Phi_\eps^\tau \right)\circ\Phi^{t-\tau}_\eps d\tau.
\end{eqnarray*}
We consider $\partial_{z_k}(\Phi^\tau_\eps)_i = \left(\left(J\,D^2 h_\eps\circ\Phi^\tau_\eps\right) \partial_{z_k}\Phi^\tau_\eps\right)_i$ and compute 
\begin{eqnarray*}
\lefteqn{\partial_\tau \partial^2_{z_k z_l}(\Phi^\tau_\eps)_i}\\ 
&=& 
\sum_{j,n=1}^{2d} \left(\partial_n(J\, D^2 h_\eps)_{ij}\circ \Phi^\tau_\eps\right)\,\partial_{z_l}(\Phi^\tau_\eps)_n\, \partial_{z_k}(\Phi^\tau_\eps)_j
+ \left((J\,D^2 h_\eps\circ\Phi^\tau_\eps)\partial^2_{z_kz_l}\Phi^\tau_\eps\right)_i\\
&=:&
\sum_{j,n=1}^{2d} (\widetilde C_i(\tau))_{jn}\,(D\Phi^\tau_\eps)_{ln}\,(D\Phi^\tau_\eps)_{kj}
+ \left((J\,D^2 h_\eps\circ\Phi^\tau_\eps)\partial^2_{z_kz_l}\Phi^\tau_\eps\right)_i.
\end{eqnarray*}
Therefore, 
\begin{eqnarray*}
\partial_t(\widetilde\Gamma^t_\eps)_i &=& ((J\,D^2 h_\eps\circ\Phi^t_\eps)\,\widetilde\Gamma^t_\eps)_i\\ 
&&+\; 
\sum_{j,k,l,n=1}^{2d} (\widetilde C_i(t))_{jn} 
\int_0^t \left( (J\,D^2h)_{kl}\,(D\Phi^\tau_\eps)_{ln}\, (D\Phi^\tau_\eps)_{kj}\right)
\circ\Phi^{t-\tau}_\eps d\tau\\
&=&
((J\,D^2 h_\eps\circ\Phi^t_\eps)\,\widetilde\Gamma^t_\eps)_i + 
\sum_{j,n=1}^{2d} (\widetilde C_i(t))_{jn} (\widetilde \Lambda^t_\eps)_{jn}\\
&=&
((J\,D^2 h_\eps\circ\Phi^t_\eps)\,\widetilde\Gamma^t_\eps)_i + {\rm tr}(\widetilde C_i(t)^{\rm T}\,\widetilde\Lambda_\eps^t).
\end{eqnarray*}
Since $J\,D^2 h_\eps\circ\Phi^t_\eps=M^\eps(t)+O(\eps)$, $C_i(t)=\widetilde C_i(t)+O(\eps)$, and $\Lambda^t_\eps=\widetilde\Lambda^t_\eps+O(\eps)$, the solution $\Gamma^t_\eps$ of equation~(\ref{gamma-equation}) satisfies $\Gamma^t_\eps=\widetilde\Gamma^t_\eps+O(\eps)$, and we conclude 
$$
a\circ\Phi^t_\eps - \tfrac\eps2\,\Xi^t_\eps(a) = 
a\circ\Phi^t_\eps - \tfrac\eps2\left( {\rm tr}(\Lambda^t_\eps \,(D^2a\circ\Phi^t_\eps))
+\Gamma^t_\eps \cdot (\nabla a\circ \Phi^t_\eps) \right) + O(\eps^2).
$$
\end{proof}

\begin{cor}\label{cor}
Under the assumptions of Theorem~\ref{corrected_flow-equation}, there exists a constant $C=C(a,h,t)>0$ such that $\psi_t=e^{-iHt/\eps}\psi_0$ satisfies
\begin{equation}\label{eq:approx_husimi}
\left|\langle\psi_t,\opwe(a)\psi_t\rangle_{L^2} - \int_{\R^{2d}} F^t_\eps(a_\eps)(z)\, \H^\eps(\psi_0)(z)\, dz \right| \le C \eps^2 
\end{equation}
for all $\psi_0\in L^2(\R^d)$ with $\|\psi_0\|_{L^2}=1$, where 
$$
F_\eps^t(a) = a_\eps\circ\Phi_\eps^t - \tfrac{\eps}{2} \left( {\rm tr}(\Lambda^t_\eps \,(D^2a\circ\Phi^t_\eps)) + \Gamma_\eps^t \cdot(\nabla a\circ \Phi_\eps^t)\right).
$$
\end{cor}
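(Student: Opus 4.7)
The plan is to chain three tools that are already in place: the second-order Weyl-to-Anti-Wick conversion~(\ref{We_second}), the propagation estimate of Theorem~\ref{corrected_flow-equation}, and the Husimi identity~(\ref{husimi-equality}). No new analytic content is needed; the corollary is a bookkeeping exercise on top of Theorem~\ref{corrected_flow-equation}.

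First, I would use~(\ref{We_second}) to write
$$
\opwe(a) = \opaw(a_\eps) + \eps^2\opwe(\rho_2^\eps(a)),
$$
where $a_\eps = a-\tfrac{\eps}{4}\Delta a$. Conjugating by the unitary $e^{-iHt/\eps}$ and using the uniform bound on $\|\opwe(\rho_2^\eps(a))\|_{\mathcal{L}(L^2)}$ together with $\|\psi_0\|_{L^2}=1$ yields
$$
\langle\psi_t,\opwe(a)\psi_t\rangle_{L^2} = \bigl\langle\psi_0,\, e^{iHt/\eps}\opaw(a_\eps)e^{-iHt/\eps}\psi_0\bigr\rangle_{L^2} + O(\eps^2).
$$
Since $a$ is Schwartz, so is $a_\eps$, so Theorem~\ref{corrected_flow-equation} applies directly to the symbol $a_\eps$ and gives
$$
\bigl\|e^{iHt/\eps}\opaw(a_\eps)e^{-iHt/\eps} - \opaw(\Psi_\eps^t(a_\eps))\bigr\|_{\mathcal{L}(L^2)} \le C\eps^2.
$$

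Next, I would reconcile $\Psi_\eps^t(a_\eps)$ with $F_\eps^t(a_\eps)$ modulo $O(\eps^2)$. Unfolding the definition of $\Psi_\eps^t$ and using $\nabla a_\eps = \nabla a + O(\eps)$ and $D^2 a_\eps = D^2 a + O(\eps)$ in the Schwartz topology, the correction terms ${\rm tr}(\Lambda_\eps^t(D^2a_\eps\circ\Phi_\eps^t))$ and $\Gamma_\eps^t\cdot(\nabla a_\eps\circ\Phi_\eps^t)$, which already carry a prefactor of $\eps$, differ from the ones built from $a$ by $O(\eps^2)$ in every Schwartz seminorm on compact time intervals (using smoothness and boundedness of $\Phi_\eps^t$, $\Lambda_\eps^t$, $\Gamma_\eps^t$ on $[0,t]$). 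Passing this symbol-level difference through Lemma~\ref{AW-Weyl_connection} and Calderon--Vaillancourt gives
$$
\bigl\|\opaw(\Psi_\eps^t(a_\eps)) - \opaw(F_\eps^t(a_\eps))\bigr\|_{\mathcal{L}(L^2)} \le C\eps^2.
$$

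Finally, I would invoke the Husimi identity~(\ref{husimi-equality}) to rewrite
$$
\langle\psi_0,\opaw(F_\eps^t(a_\eps))\psi_0\rangle_{L^2} = \int_{\R^{2d}} F_\eps^t(a_\eps)(z)\,\H^\eps(\psi_0)(z)\,dz,
$$
and assemble the three estimates via the triangle inequality, using $\|\psi_0\|_{L^2}=1$ to convert operator-norm bounds into expectation bounds. The only point requiring care is verifying that the $\eps$-dependent constants in the three separate $O(\eps^2)$ estimates can be absorbed into a single constant $C=C(a,h,t)$; this follows because the derivatives of $\Phi_\eps^t$ and the solutions $\Lambda_\eps^t$, $\Gamma_\eps^t$ of the linear ODEs~(\ref{lyapunov})--(\ref{gamma-equation}) have bounds uniform in $\eps\in(0,1]$ on $[0,t]$, due to the subquadratic growth of $h$.
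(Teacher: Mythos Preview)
Your proposal is correct and follows essentially the same three-step chain as the paper's proof: convert $\opwe(a)$ to $\opaw(a_\eps)$ via~(\ref{We_second}), apply Theorem~\ref{corrected_flow-equation} to the symbol $a_\eps$, and finish with the Husimi identity~(\ref{husimi-equality}), noting along the way that $\Psi_\eps^t(a_\eps)$ and $F_\eps^t(a_\eps)$ differ only at order $\eps^2$ because $\nabla a_\eps=\nabla a+O(\eps)$ and $D^2a_\eps=D^2a+O(\eps)$ inside terms already carrying a factor $\tfrac\eps2$. The paper compresses all of this into two lines; your additional remarks on passing symbol estimates to operator norms via Calderon--Vaillancourt and on uniformity in $\eps$ of the flow bounds are valid elaborations of what the paper leaves implicit.
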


\begin{proof} We have
\begin{eqnarray*}
\lefteqn{\langle\psi_t,\opwe(a)\psi_t\rangle_{L^2} = \langle\psi_t,\opaw(a_\eps)\psi_t\rangle_{L^2} + O(\eps^2)}\\
&=& \langle\psi_0,\opaw(\Psi^t_\eps(a_\eps))\psi_0\rangle_{L^2} + O(\eps^2)
= \int_{\R^{2d}} \Psi^t_\eps(a_\eps)(z)\, \H^\eps(\psi_0)(z)\, dz + O(\eps^2)
\end{eqnarray*}
and $\Psi_\eps^t(a_\eps) = F_\eps^t(a) + O(\eps^2)$.
\end{proof}

\section{Discretization}\label{sec:flow}

For a numerical realization of the semiclassical approximation (\ref{eq:approx_husimi}), we proceed in two steps. First, we discretize the phase space integral by equiweighted quadrature with nodes $z_1,\ldots,z_N\in\R^{2d}$ distributed according to the Husimi function $\H^\eps(\psi_0)$. Then, we discretize the time evolution of $\Phi^t_\eps$, $\Lambda^t_\eps$, and $\Gamma^t_\eps$ initialized with the sampling points $z_1,\ldots,z_N$. For both discretization steps, we compute the $\eps$-correction with lower accuracy.

\subsection{Numerical quadrature}
To discretize the phase space integral  
\begin{eqnarray*}
\lefteqn{\int_{\R^{2d}} F^t_\eps(a_\eps)(z)\, \H^\eps(\psi_0)(z)\, dz}\\
&=&
\int_{\R^{2d}} (a_\eps\circ\Phi^t_\eps)(z)\, \H^\eps(\psi_0)(z)\, dz - \tfrac\eps2 \int_{\R^{2d}} \Xi^t_\eps(a)(z)\, \H^\eps(\psi_0)(z)\, dz\\*[1ex]
&=:&
I(a_\eps\circ\Phi^t_\eps) - \tfrac\eps2 I(\Xi^t_\eps(a))
\end{eqnarray*}
we use the fact, that the Husimi function of a normalized wave function $\psi_0$ is a probability density, and work either with Markov chain Monte Carlo quadrature or with Quasi-Monte Carlo quadrature. 
That is, we approximate 
$$
I(f)\approx
\frac1N\sum_{j=1}^N f(z_j) =: Q_N(f)
$$
with quadrature nodes $z_1,\ldots,z_N\in\R^{2d}$ distributed according to $\H^\eps(\psi_0)$. Thanks to the small $\eps$-prefactor, the correction integral $I(\Xi^t_\eps(a))$ can be computed with less quadrature nodes than the leading order $I(a_\eps\circ\Phi^t_\eps)$.

\subsubsection{Markov chain Monte Carlo quadrature}
We use Metropolis Monte Carlo and generate sample points $z_1,\ldots,z_N$, which form a Markov chain with stationary distribution $\H^\eps(\psi_0)$, see e.g. \cite[\S 4.1]{KLW09}. If the chain is uniformly ergodic, then a central limit theorem holds, and there exists a constant $\gamma=\gamma(f)>0$ such that for all $c>0$
$$
\lim_{N\to\infty} \P\left(
\left| I(f) -  Q_N(f)\right| \le \frac{c\gamma}{\sqrt{N}}
\right) = 
\frac{1}{\sqrt{2\pi}}\int_{-c}^c e^{-\tau^2/2} d\tau.
$$ 
We note, that for the numerical experiments in Section \S\ref{sec:numerics}, the constant $\gamma(f)$ shows a beneficial dependence on the semiclassical parameter $\eps>0$, since the variance of the integrands decreases with decreasing $\eps$. 

\subsubsection{Quasi-Monte Carlo quadrature}\label{QMC-theory}
For Gaussian wave packets $\psi_0$, the Husimi function $\H^\eps(\psi_0)$ is a phase space Gaussian, and we view it as the density of a multivariate normal distribution. For notational simplicity, we focus on the special case of an isotropic Gaussian wave packet $g_{z_0}$ centered in $z_0=(q_0,p_0)\in\R^{2d}$, 
\begin{equation}
\label{eq:gauss}
g_{z_0}(q)=(\pi \eps)^{-d/4}\exp\left(-\tfrac{1}{2\eps}|q-q_0|^2 + \tfrac{i}{\eps} p_0\cdot (q-q_0)\right).
\end{equation}
In this case, the Husimi function is the isotropic phase space Gaussian
$$ 
\H^\eps(g_{z_0})(z) = (2\pi\eps)^{-d} \exp\left( -\tfrac{1}{2\eps}|z-z_0|^2\right),
$$
that is, the density of a normal distribution with mean $z_0$ and covariance~$\eps\Id$. 
Generating points of low star discrepancy with respect to the uniform distribution on $[0,1]^{2d}$, 
that is, for example Sobol points, we use the cumulative distributive function of the normal distribution 
and map them to points $z_1,\ldots,z_N$ such that their star discrepancy with respect to the normal distribution  
\begin{eqnarray*}
\lefteqn{{\mathcal D}^*(z_1,\ldots,z_N)}\\
&=& \sup_{\alpha\in\R^{2d}} \left| \tfrac{1}{N} \#\{z_j: z_j\in(-\infty,\alpha), j=1,\ldots,N\} - \H^\eps(\psi_0)((-\infty,\alpha))\right|
\end{eqnarray*}
is optimal, that is, ${\mathcal D}^*(z_1,\ldots,z_N)=O((\log N)^{2d}/N)$. Then, the Koksma-Hlawka inequality yields a constant $\gamma=\gamma(f)>0$ such that 
$$
\left|  I(f)- Q_N(f)\right| \le \frac{\gamma (\log N)^{c_{d}}}{N},
$$
where $c_{d}\ge 2d$ dependends on the dimension of phase space, see \cite[3.2]{LR10}. We note, that as for the Monte Carlo quadrature, the numerical experiments of Section \S\ref{sec:numerics} show a favourable dependance of the constant $\gamma(f)$ on the semiclassical parameter~$\eps>0$.

\subsection{A time splitting scheme}

For numerically computing $\Phi^t_\eps$, $\Lambda^t_\eps$, and $\Gamma^t_\eps$, we discretize the Hamiltonian flow~(\ref{eq:ham}) with a higher order symplectic scheme for an 
accurate approximation of the leading order contribution $a_\eps\circ\Phi^t_\eps$. The correction equations 
(\ref{lyapunov}) and (\ref{gamma-equation}) are simultaneously discretized after vectorizing the Lyapunov matrix differential equation (\ref{lyapunov}). For this purpose we use the Kronecker product
$$
A\otimes B = \left(\begin{array}{clcc} a_{11}B&\cdots&a_{1n}B\\
\vdots& &\vdots \\ a_{n1}B&\cdots&a_{nn}B\end{array} \right)~~\in\R^{n\cdot m \times n\cdot m}~~.
$$
of two matrices $A\in\R^{n\times n}$ and $B\in\R^{m\times m}$ as well as the rowwise vectorization 
$$
{\rm Vec}(A)=\left(a_{11},a_{12},\hdots,a_{nn}\right)^\T =\vec A\in\R^{n^2}.
$$

\begin{lem} The differential equations (\ref{eq:ham}), (\ref{lyapunov}), and (\ref{gamma-equation}) are equivalent to the coupled vector differential equation with $4d+4d^2$ components 
\begin{equation}
\label{complete_variational}
\partial_t \begin{pmatrix}\Phi^t_\eps \\ \vec\Lambda_\eps^t\\ \Gamma_\eps^t \end{pmatrix} =
\begin{pmatrix}\Id_{2d}&0&0 \\ 0& K_\eps(t)&0\\ 0 & C_\eps(t) &  M_\eps(t) \end{pmatrix} 
\begin{pmatrix} J\nabla h_\eps\circ\Phi_\eps^t \\ \vec\Lambda_\eps^t\\ \Gamma_\eps^t \end{pmatrix} +
\begin{pmatrix}0\\ \vec M_\eps(t)\\ 0 \end{pmatrix}
\end{equation}
with 
\begin{eqnarray*}
K_\eps(t)&=& {M}_\eps(t)\otimes \Id_{2d} + \Id_{2d} \otimes{M}_\eps(t),\\
C_\eps(t) &=& \begin{pmatrix}\vec C_1(t)^\T\\ \vdots\\ \vec C_{2d}(t)^\T\end{pmatrix}.
\end{eqnarray*}
\end{lem}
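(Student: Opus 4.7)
The lemma amounts to rewriting the matrix Lyapunov equation~(\ref{lyapunov}) and the $\Gamma$-equation~(\ref{gamma-equation}) as linear systems on $\R^{4d^2}$ and $\R^{2d}$ respectively, using the rowwise vectorization. My plan is to treat the three block rows of~(\ref{complete_variational}) separately and check that each one reduces to the corresponding ODE.

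The first block row $\partial_t\Phi^t_\eps = \Id_{2d}\,(J\nabla h_\eps\circ\Phi^t_\eps)$ is exactly the Hamilton equation~(\ref{eq:ham}), so there is nothing to show. For the third block row, I would use that for any matrix $C\in\R^{2d\times 2d}$ and $\Lambda\in\R^{2d\times 2d}$ one has the elementary identity
\begin{equation*}
{\rm tr}(C^\T\Lambda)=\sum_{j,k=1}^{2d}C_{jk}\Lambda_{jk}=\vec C^{\,\T}\vec\Lambda.
\end{equation*}
Applying this to $C=C_i(t)$ for $i=1,\ldots,2d$ and stacking the results into a column vector yields
\begin{equation*}
\bigl({\rm tr}(C_i(t)^\T\Lambda^t_\eps)\bigr)_{i=1}^{2d}=C_\eps(t)\,\vec\Lambda^t_\eps
\end{equation*}
with $C_\eps(t)$ the matrix whose $i$th row is $\vec C_i(t)^\T$, matching exactly the third block row of~(\ref{complete_variational}).

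The main step, and the only one with any computational substance, is the second block row. Here I would invoke the standard Kronecker identity for the rowwise vectorization: for $A,B,X\in\R^{2d\times 2d}$,
\begin{equation*}
{\rm Vec}(AX)=(A\otimes\Id_{2d})\,{\rm Vec}(X),\qquad {\rm Vec}(XB)=(\Id_{2d}\otimes B^\T)\,{\rm Vec}(X).
\end{equation*}
Applying these to $A=M_\eps(t)$, $B=M_\eps(t)^\T$ and $X=\Lambda^t_\eps$ and summing, the right hand side of~(\ref{lyapunov}) vectorizes to
\begin{equation*}
\vec M_\eps(t)+\bigl(M_\eps(t)\otimes\Id_{2d}+\Id_{2d}\otimes M_\eps(t)\bigr)\vec\Lambda^t_\eps = \vec M_\eps(t)+K_\eps(t)\,\vec\Lambda^t_\eps,
\end{equation*}
which is exactly the second block row. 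The only place where care is required is verifying the conventions for the rowwise ${\rm Vec}$ operator, since for the more common columnwise vectorization the roles of $A$ and $B^\T$ in the Kronecker product are swapped; I would state this identity explicitly and refer, e.g., to a standard matrix-analysis textbook. Conversely, any solution of~(\ref{complete_variational}) produces, by reversing the same identities, a triple $(\Phi^t_\eps,\Lambda^t_\eps,\Gamma^t_\eps)$ solving (\ref{eq:ham}), (\ref{lyapunov}), and~(\ref{gamma-equation}), so the two systems are genuinely equivalent.
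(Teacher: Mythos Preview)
Your proposal is correct and follows essentially the same approach as the paper: the paper likewise treats the $\Phi^t_\eps$-row as immediate, verifies the Kronecker identities ${\rm Vec}(AB)=(A\otimes\Id)\,{\rm Vec}(B)$ and ${\rm Vec}(BA^\T)=(\Id\otimes A)\,{\rm Vec}(B)$ for the rowwise vectorization (by a direct index computation rather than citing a reference), and uses $\tr(C^\T\Lambda)=\vec C^{\,\T}\vec\Lambda$ for the $\Gamma$-equation. Your explicit caution about the rowwise versus columnwise convention is well placed and is exactly the point the paper handles by computing the identity from scratch.
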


\begin{proof}
First we show
$$ {\rm Vec}(AB) = (A\otimes I_{n})\,{\rm Vec}(B)$$
for $A, B\in\R^{n\times n}$. Indeed, for $1\leq cn+i \leq n^2$
\begin{eqnarray*}
{\rm Vec}(AB) _{c n + i} &=& \sum_{j=1}^{n} a_{cj}b_{ji} =  
\sum_{j=1}^{n} (A\otimes I_{n})_{(cn+i),(jn+i)}{\rm Vec}(B)_{jn+i}\\
&=& \sum_{k=1}^{n^2} (A\otimes I_{n})_{(cn+i),k}\,{\rm Vec}(B)_{k} =  \left((A\otimes I_{n})\,{\rm Vec}(B)\right)_{cn+i}. 
\end{eqnarray*}
Similarly one can prove that ${\rm Vec}(BA^{\rm T}) = (I_{n} \otimes A)\,{\rm Vec}(B)$. Hence, 
\begin{eqnarray*}
\partial_t{\rm Vec}(\Lambda_\eps^t) &=& \vec M_\eps(t)+{\rm Vec}\left(
 M_\eps(t)\,\Lambda_\eps^t + 
\Lambda_\eps^t \,M_\eps(t)^{\rm T} \right)\\
&=&\vec M_\eps(t) + ( M_\eps(t) \otimes \Id_{2d})\,\vec\Lambda_{\eps}^t + 
(\Id_{2d} \otimes  M_\eps(t))\, \vec\Lambda_{\eps}^t\\
&=& \vec M_\eps(t) + K_\eps(t)\,\vec\Lambda_{\eps}^t.
\end{eqnarray*}
Moreover, $\tr(C_i(t)\Lambda^t_\eps)=\vec C_i(t)^T \vec\Lambda^t_\eps$ and 
$$
\partial_t\Gamma^t_\eps = M_\eps(t)\Gamma^t_\eps + 
\begin{pmatrix}\vec C_1(t)^\T\\ \vdots\\ \vec C_{2d}(t)^\T\end{pmatrix}\vec\Lambda^t_\eps 
= M_\eps(t)\Gamma^t_\eps + C_\eps(t) \vec\Lambda^t_\eps.
$$
\end{proof}

For Schr\"odinger Hamiltonians $h(q,p)=\tfrac12|p|^2+V(q)$ with smooth subquadratic potential $V:\R^d\to\R$, we observe that
$$
M_\eps(t) = J\,D^2 h\circ\Phi^t_\eps = \begin{pmatrix}0 & \Id_d\\ -D^2 V((\Phi^t_\eps)_q) & 0\end{pmatrix},
$$
while $C_i(t)_{jk}=\partial_k(JD^2 h)_{ij}\circ \Phi^t_\eps$ satisfies $C_i(t)=0$ for $i=1,\ldots,d$ and depends on third derivatives of $V$  and  $(\Phi^t_\eps)_q$ for $i=d+1,\ldots,2d$. This motivates to split 
$$
\Upsilon^t := 
\begin{pmatrix}(\Phi_\eps^t)_q\\ (\Phi_\eps^t)_p\\ \vec\Lambda_\eps^t\\ \Gamma_\eps^t \end{pmatrix} = 
\begin{pmatrix} 0\\ (\Phi_\eps^t)_p\\ \vec\Lambda_\eps^t\\ \Gamma_\eps^t \end{pmatrix} + 
\begin{pmatrix}(\Phi_\eps^t)_q\\ 0\\ 0\\ 0 \end{pmatrix} =:
\Upsilon^t_1 + \Upsilon_2^t
$$
and to rewrite the differential equation (\ref{complete_variational}) as 
\begin{equation}
\label{splitting-equation}
\partial_t\Upsilon^t = A(\Upsilon_2^t)\Upsilon_1^t+ N(\Upsilon_2^t)+B\Upsilon_1^t
\end{equation}
with 
$$
A(\Upsilon_2^t) = \begin{pmatrix}0 &0&0&0\\ 0&0&0&0 \\0&0& K_\eps(t)&0\\ 0&0 & C_\eps(t) &  M_\eps(t)\end{pmatrix},
\quad
B = \begin{pmatrix}0 &\Id_d&0&0\\ 0&0&0&0 \\0&0&0&0\\ 0&0 &0&0\end{pmatrix}
$$
and
$$
N(\Upsilon_2^t) = \begin{pmatrix}0\\-\nabla V((\Phi^t_\eps)_q)\\\vec{M}_\eps(t)\\0\end{pmatrix}.
$$
Let $\phi_a^t$ and $\phi_b^t$ be the flows of the systems 
$$
\partial_t \Upsilon^t = A(\Upsilon_2^t)\Upsilon_1^t + N(\Upsilon_2^t), \qquad
\partial_t \Upsilon^t = B\Upsilon_1^t,
$$ 
respectively, and 
$$
F^h = \phi_a^{h/2} \phi_b^h \phi_a^{h/2} 
$$
the Strang splitting for $h>0$, which provides a second order time discretization of equation \ref{splitting-equation}, 
see e.g. \cite[\S III.3.4]{HWL06}. This splitting scheme produces an approximate solution via 
\begin{eqnarray*}
\Upsilon_1^{h/2} &=& \Upsilon_1^0 + \tfrac{h}{2}(A(\Upsilon_2^0)\Upsilon_1^0 + N(\Upsilon_2^0)),\\
\Upsilon_2^h &=& \Upsilon_2^0 + hB\Upsilon_1^{h/2},\\
\Upsilon_1^{h} &=& \Upsilon_1^{h/2} + \tfrac{h}{2}(A(\Upsilon_2^h)\Upsilon_1^{h/2} + N(\Upsilon_2^h)).
\end{eqnarray*}
and discretizes $\Phi^t_\eps$ by the St\"ormer-Verlet scheme, while $\Lambda_\eps^t$ and $\Gamma_\eps^t$ are discretized by the midpoint rule.

\begin{rem}
The flows $\phi_a^t$ and $\phi_b^t$ are numerically easy to evaluate, since they are determined
by differential equations with constant coefficient matrices and a constant inhomogeneity.
\end{rem}

\section{Numerical experiments}\label{sec:numerics}

Let $\psi_t:\R^d\to\C$ the solution of the Schr\"odinger equation~(\ref{eq:schro}) and $a:\R^{2d}\to\R$ a Schwartz function. The preceeding algorithmical considerations suggest 
\begin{equation}
\label{eq:algo_h2}
\left\langle\psi_t,\opwe(a)\psi_t\right\rangle_{L^2} \approx \frac{1}{N_1} \sum_{j=1}^{N_1} (a_\eps\circ\widetilde\Phi^t_\eps)(w_j) - \frac{\eps}{2\cdot N_2} 
\sum_{j=1}^{N_2} \widetilde \Xi^t_\eps(a)(z_j),
\end{equation}
where $w_1,\ldots,w_{N_1}\in\R^{2d}$ and $z_1,\ldots,z_{N_2}\in\R^{2d}$, $N_1\ge N_2$, are distributed according to $\H^\eps(\psi_0)$. $\widetilde \Xi^t_\eps(a)$ is obtained by $\lfloor t/h_2\rfloor$ iterates of the Strang splitting scheme $F^{h_2}$ and the application of the resulting vector to~$a$ and its derivatives according to Corollary~\ref{cor}. The leading order contribution $\widetilde\Phi^t_\eps$ comes from a sixth order symplectic Yoshida splitting \cite[Table 1, Solution B]{Y90}
with time step $h_1$. 
Our numerical experiments\footnote{All experiments have been performed with {\sc MATLAB}~7.14 on a 3.33 GHz Intel Xeon X5680 processor.} validate this approach for different observables~$a$: 
the position and momentum operators defined by 
$$
(q,p)\mapsto q_j,\quad (q,p)\mapsto p_j,\quad j=1,\ldots,d,
$$
as well as the potential, kinetic, and total energy operators defined by 
$$
(q,p)\mapsto V(q),\quad (q,p)\mapsto\tfrac12|p|^2,\quad (q,p)\mapsto h(q,p),
$$ 
respectively. The main focus of the experiments is on the positive answer of the following two questions: Is the algorithm feasible in a moderate high dimensional setting? Can the asymptotic $O(\eps^2)$ accuracy be acchieved efficiently, that is, with a reasonable number of sampling points $N_1$, $N_2$ and time stepping $h_1$, $h_2$?

\subsection{Six dimensions}\label{sec:henon}

Our first experiment is concerned with the Henon-Heiles potential in six dimensions, 
$$
V(q) = \sum_{j=1}^6\tfrac{1}{2}q_j^2 + \sum_{j=1}^5 \sigma_*(q_jq_{j+1}^2 - \tfrac{1}{3}q_j^3)+ 
\tfrac{1}{16}\sigma_*^2(q_j^2 + q_{j+1}^2)^2
$$
with $\sigma_*=1/\sqrt{80}$. As in \cite[\S5.4]{FGL09} and \cite[\S6]{LR10}, we choose the semiclassical parameter $\eps=0.01$ and a Gaussian initial state (\ref{eq:gauss}) centered in $z_0=(q_0,0)$ with $q_0 = (2,2,2,2,2,2)^{\rm T}$. Since a grid based reference solution of the six-dimensional Schr\"odinger equation is not feasible, we compare the following three asymptotic particle methods: 
\begin{description}
\item[A.] The Husimi based method defined in (\ref{eq:algo_h2}) using $N_1=2^{14}$ and $N_2=2^{10}$ 
transformed Sobol points for the numerical quadratures and the same time stepping $h=10^{-3}$
for both $\widetilde\Phi^t_\eps$ and $\widetilde\Xi^t_\eps(a)$. We note that these numbers of Sobol points provide initial sampling errors smaller than $\eps^2=10^{-4}$ for the leading term, and
considerably smaller than $\eps=10^{-2}$ for the correction term, 
see Table~\ref{tab_henon}.
\item[B.] The naive Husimi method
$$
\left\langle\psi_t,\opwe(a)\psi_t\right\rangle_{L^2} \approx \frac1{N_1} \sum_{j=1}^{N_1}
 (a\circ\widetilde\Phi^t)(z_j),
$$
where $\widetilde\Phi^t$ is a sixth order symplectic Yoshida discretization of Hamilton's equation
$\dot q=p$, $\dot p=-\nabla V(q)$ with time stepping $h$. 
The quadrature nodes $z_1,\ldots,z_N$ are obtained by transforming Sobol points, such that they are distributed according to $\H^\eps(\psi_0)$. This approach is first order accurate with respect to $\eps$.
\item[C.] The second order Wigner based method 
$$
\left\langle\psi_t,\opwe(a)\psi_t\right\rangle_{L^2} \approx \frac1{N_1} \sum_{j=1}^{N_1} 
(a\circ\widetilde\Phi^t)(w_j)
$$
of \cite[\S6]{LR10}, where the quadrature nodes $w_1,\ldots,w_N$ are obtained by transforming Sobol points, such that they are distributed according to the initial Wigner function~$\W^\eps(\psi_0)$. 
\end{description}

Figure~\ref{henon} shows that the difference of the second order algorithms A and~C is bounded by  $6\cdot10^{-4}=6\eps^2$, while the naive Husimi approach of algorithm B deviates from method~C by $0.06=6\eps$. The computing time for the algorithms B and C, which use uncorrected classical transport, is 6~minutes, while the corrected Husimi algorithm~A requires 17 minutes.

\begin{table}[ht!]
\centering
\begin{tabular}{c|cccc}
initial error          & Husimi $N_1$   & Husimi $N_2$ & Wigner $N_1$  & \\ \hline
kinetic energy & $3.2\cdot 10^{-5}$ & $3.3\cdot 10^{-4}$ & $1.6 \cdot 10^{-5}$ &\\
potential energy & $8.1\cdot 10^{-5}$ & $5.8\cdot 10^{-4}$ & $4.8 \cdot 10^{-5}$ &\\
\end{tabular}
\caption{\label{tab_henon} Initial sampling errors of algorithms A (Husimi) and C (Wigner) 
with respect to the analytically 
computed expectation values for the initial kinetic and potential energy of the Henon-Heiles system,
using $N_1=2^{14}$ and $N_2=2^{10}$ transformed Sobol points.}
\end{table}

\begin{figure}[ht!]
\includegraphics[viewport=110 75 300 292,scale=0.8]{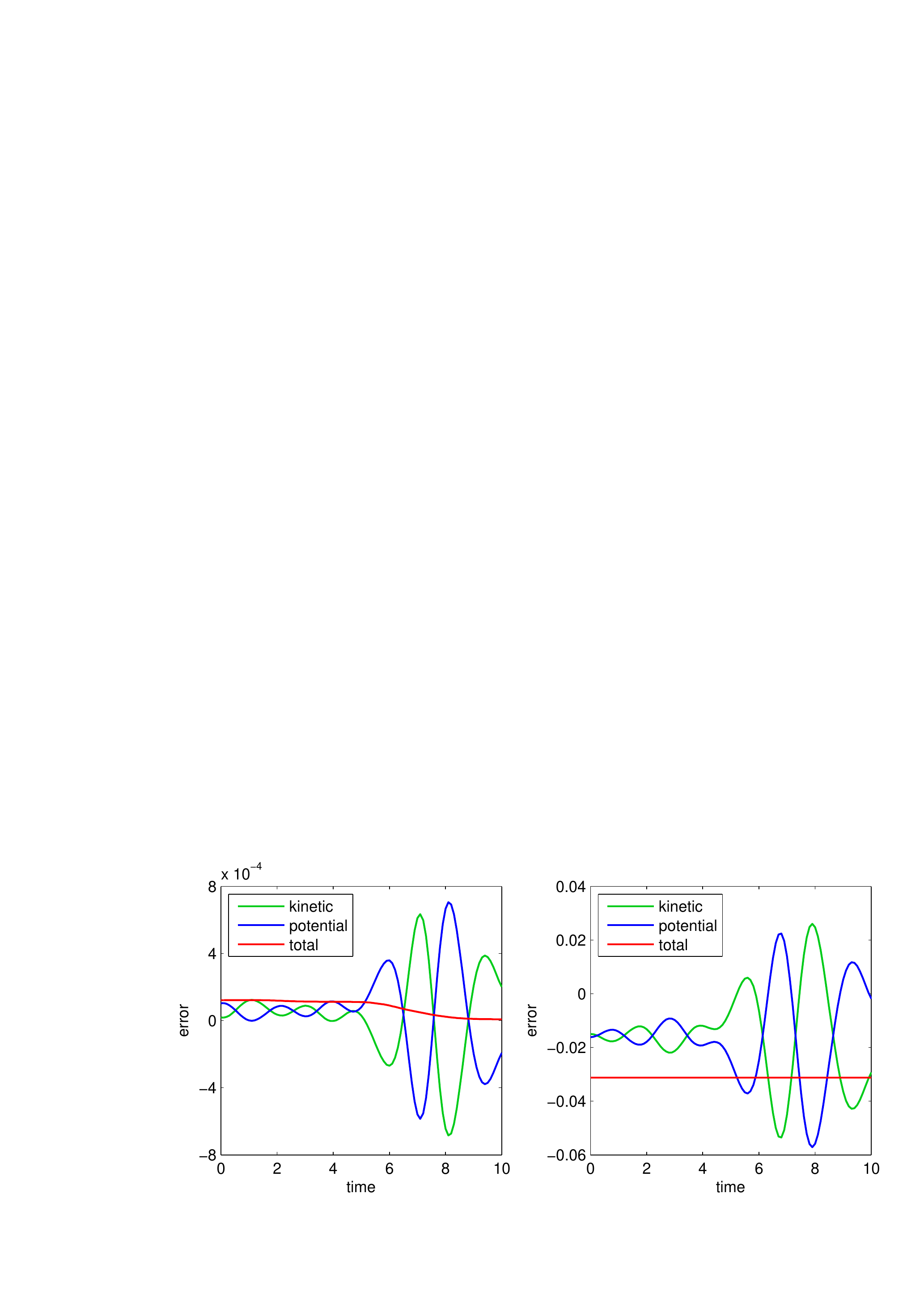}
\caption{\label{henon} The difference of the expectation values of the kinetic, potential and total energy for the six-dimensional Henon-Heiles system ($\eps=0.01$) computed by the second order algorithms A and C (left) as well as B and C (right) as functions of time. The plots illustrate that the corrected Husimi method A is more accurate than the naive Husimi approach B.}
\end{figure}

\subsection{Different semiclassical parameters}

The next set of experiments is performed for the two-dimensional torsional potential, 
\begin{equation}
\label{eq:pot_tor}
V(q) = 2- \cos(q_1)-\cos(q_2),
\end{equation}
and different values of the semiclassical parameter $\eps$. As
initial data we consider both Gaussian states, and superpositions of Gaussian states.

While the Husimi transform of a single Gaussian wave packet is a phase space Gaussian,
for Gaussian superpositions the Husimi function can easily be calculated as 
$$ 
\H^\eps( g_{z_1}+g_{z_2}) = \H^\eps(g_{z_1}) + \H^\eps(g_{z_2}) + 2C_{z_1,z_2},
$$
where the cross term
$$ 
C_{z_1,z_2}(z) =(2\pi\eps)^{-d} \exp\left(-\tfrac{1}{8\eps}|z_-|^2\right) \exp\left(-\tfrac{1}{2\eps}|z- z_+|^2\right) 
\cos\left(\tfrac{1}{2\eps}(c_{1,2}-Jz\cdot z_-)\right)
$$
has a Gaussian envelope centered around the mean $z_+=\frac{1}{2}(z_1+z_2)$ and oscillates with a frequency proportional to the difference $z_-=z_1-z_2$. The shift is $c_{1,2}=q(z_1)p(z_1)-q(z_2)p(z_2)$. The cross term contains a constant damping factor, which is exponentially small in $|z_-|^2$. This allows the tails of $\H^\eps(g_{z_1})$ and $\H^\eps(g_{z_2})$ to absorb the oscillations of the cross term, which turns the Husimi function positive. We perform experiments for the following two different set-ups:

\begin{description}
\item[D.] We apply the 
Husimi based method defined in (\ref{eq:algo_h2}) for Gaussian initial data $g_{z_0}$ centered in $z_0=(1,0,0,0)^\T$.
The considered semiclassical parameters are $\eps\in \{10^{-1},5\cdot 10^{-2},10^{-2},5\cdot 10^{-3},10^{-3}\}$.
\item[E.]  We apply the 
Husimi based method defined in (\ref{eq:algo_h2}). The initial states $\psi_0$ are superpositions 
$g_{z_1}+g_ {z_2}$ with phase space centers $z_1=(0.5,-0.6,0,0)^\T$ and $z_2=(0,1,0,0)^\T$, normalized such
that $\|\psi_0\|_{L^2}=1$.
The considered semiclassical parameters are $\eps\in \{10^{-1},5\cdot 10^{-2},10^{-2},5\cdot 10^{-3},10^{-3}\}$.
\end{description}

For the experiments in set-up D, we use Quasi-Monte Carlo quadrature with transformed Sobol points for all values of $\eps$.
In set-up E, we use two different quadrature methods, depending on the
size of the semiclassical parameter $\eps$.
For $\eps\in \{10^{-1}, 5\cdot 10^{-2}\}$ we generate Markov chains of length $N_1$ and $N_2$ 
by a Metropolis algorithm with jumps between sampling regions centered 
around $z_1$, $z_2$, and $z_+$, see \cite[\S4.1]{KLW09}.
The final results are arithmetic means over ten independent runs, which provides unbiased estimates for the phase space integrals. 
For all $\eps\leq 10^{-2}$, the cross term $C_{z_1,z_2}$ in the Husimi function is 
smaller than $2\cdot10^{-13}$, due to the exponential damping factor $\exp(-\tfrac{1}{8\eps}|z_-|^2)$. Therefore, the cross term is neglected, and the two
Gaussians $\H^\eps(g_{z_1})$ and $\H^\eps(g_{z_2})$ are each sampled by $\tfrac12 N_1$ and $\tfrac12 N_2$
Sobol points for the leading term and the correction, respectively. 

Table~\ref{tab1} summarizes the chosen numbers of sampling points $N_1$, $N_2$ and the time steppings $h_1$, $h_2$ for the discretized flows. We note that the smaller the semiclassical parameter $\eps$, the more accurate the approximation of the corrected Husimi algorithm~(\ref{eq:algo_h2}), the higher the computational cost in terms of sampling 
points and time steps for achieving the asymptotic rate $O(\eps^2)$. However, the constants $\gamma(f)>0$ of the error estimates for Monte Carlo and Quasi-Monte Carlo quadrature decrease with decreasing $\eps$, due to the shrinking variance of the integrands. This observation allows to work with moderate numbers of sampling points, even for small values of the semiclassical parameter.


\begin{table}[ht!]
\centering
\begin{tabular}{c|cccc}
$\eps$          & $N_1$(D)~/~$N_1$(E)          & $N_2$(D)~/~$N_2$(E)        & $h_1$     & $h_2$ \\\hline 
$10^{-1}$       & $10^4$~/ ~$10^5$   & $10^3$~/~$10^4$       & $10^{-2}$ & $10^{-3}$\\
$5\cdot10^{-2}$ & $3\cdot10^4$ ~/~$2\cdot 10^5$   & $3\cdot10^3$ ~/~$2\cdot 10^4$ 
& $10^{-2}$ & $10^{-3}$\\
$10^{-2}$       & $10^5$   & $ 10^4$ & $10^{-3}$ & $10^{-3}$\\
$5\cdot10^{-3}$ & $3\cdot10^5$ & $2\cdot10^4$ & $10^{-3}$ & $10^{-3}$\\
$10^{-3}$       & $10^6$   & $5\cdot10^4$ & $10^{-3}$ & $2\cdot10^{-4}$\\
\end{tabular}
\caption{\label{tab1} The number of sampling points $N_1$ and $N_2$ as well as the time steps $h_1$ and $h_2$ used for the simulations with the second order Husimi algorithm (\ref{eq:algo_h2}) in both set-ups D and E.
For $\eps>10^{-2}$, we need
more sampling points in set-up E than in set-up D due to the slower convergence of the Metropolis quadrature.}
\end{table}

We generate numerically converged reference solutions for the Schr\"odinger equation (\ref{eq:schro}) using a Strang splitting scheme with Fourier collocation for the discretization of the Laplacian. The discretization parameters for the reference solutions, that is, the number of time steps, the computational domain, and the size of the space grid,  are summarized in Table~\ref{tab}. The comparison of the Husimi algorithm~(\ref{eq:algo_h2}) with the expectation values inferred from the reference solutions is presented in Figure~\ref{conv_both}. It confirms our expectations and shows second order accuracy with respect to $\eps$. Moreover, the time evolution of the errors for two of the experiments, set-up D with $\eps=10^{-1}$ and set-up E with $\eps=10^{-3}$, is presented in Figure~\ref{gauss_super}.

\begin{figure}[ht!]
\includegraphics[viewport=100 75 300 320,scale=0.8]{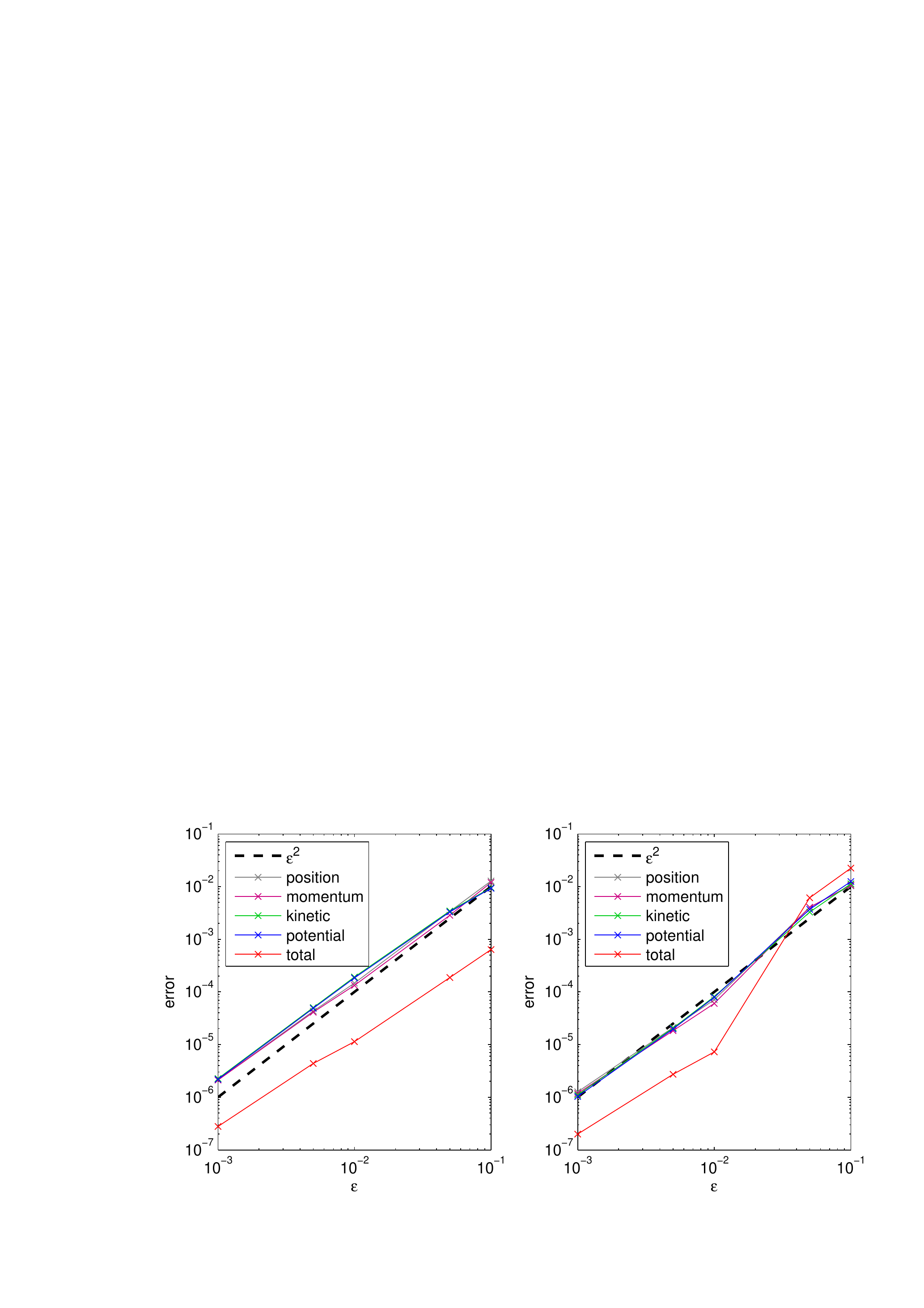}
\caption{\label{conv_both} 
The errors of the expectation values of position, momentum, potential, kinetic, and total energy for the two-dimensional torsional potential computed with the corrected Husimi algorithm~(\ref{eq:algo_h2}) 
in the experimental set-ups D (left) and E (right), averaged over the time interval $[0,20]$. 
In both cases, the errors are of the order $\eps^2$.
}
\end{figure}


\begin{table}[ht!]
\centering
\begin{tabular}{c|ccc}
$\eps$    & $\#$timesteps & domain               & space grid\\\hline 
$10^{-1}$ & $5\cdot10^3$   & $[-3,3]\times[-3,3]$ & $1536\times 1536$\\
$5\cdot 10^{-2}$ & $5\cdot10^3$   & $[-3,3]\times[-3,3]$ & $1536\times 1536$\\
$10^{-2}$ & $7.5\cdot10^3$   & $[-2,2]\times[-2,2]$ & $2048\times 2048$\\
$5 \cdot 10^{-3}$ & $10^4$         & $[-2,2]\times[-2,2]$ & $2048\times 2048$\\
$10^{-3}$ & $10^4$         & $[-2,2]\times[-2,2]$ & $2048\times 2048$
\end{tabular}
\caption{\label{tab} The discretization parameters for the grid-based reference solutions computed by a Strang splitting scheme with Fourier collocation.}
\end{table}

\begin{figure}[ht!]
\includegraphics[viewport=95 92 300 550,scale=0.8]{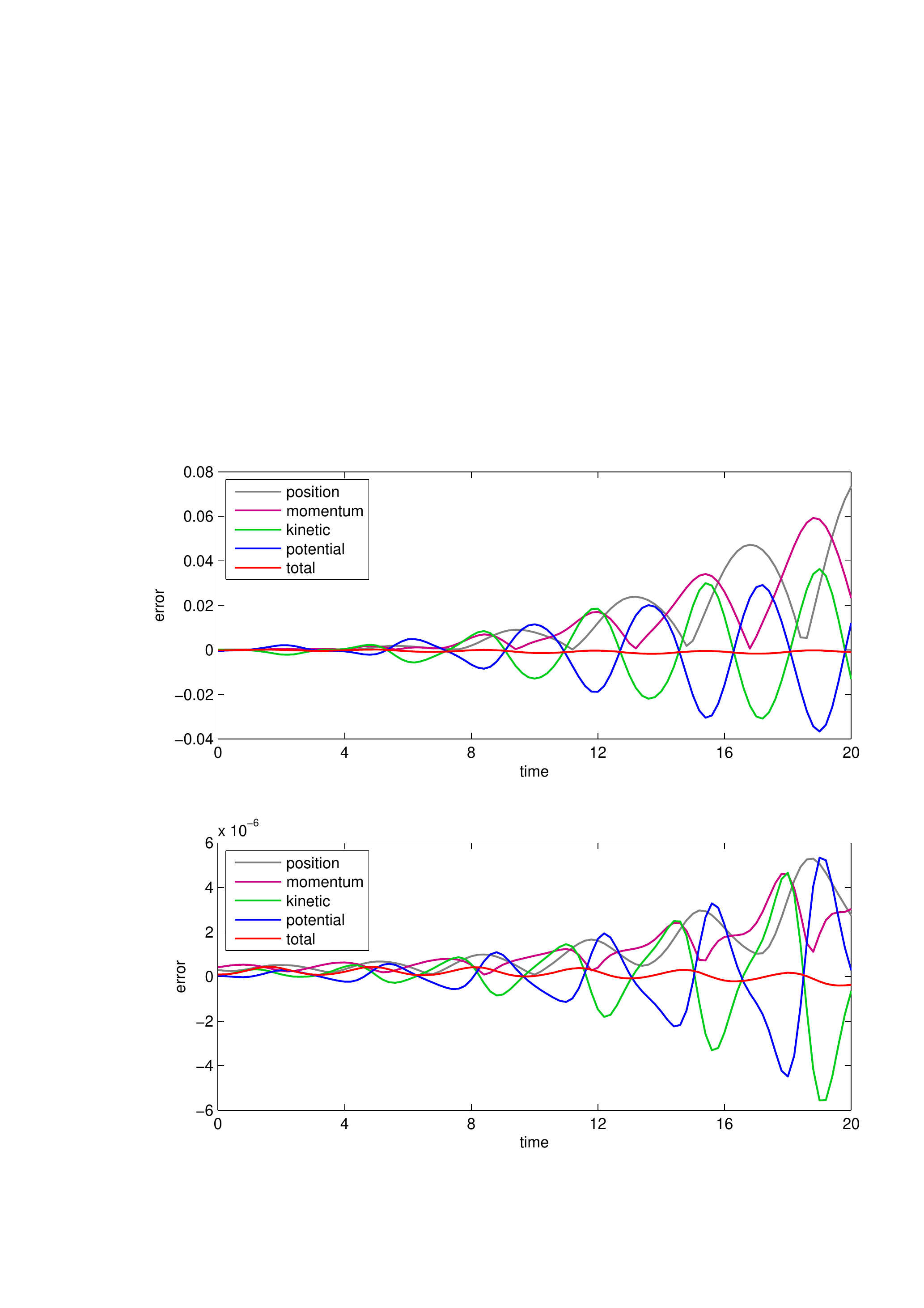}
\caption{\label{gauss_super} 
The errors of the expectation values of position, momentum, potential, kinetic, and total energy
computed with the corrected Husimi algorithm~(\ref{eq:algo_h2}) as functions of time
for experimen~ D with $\eps=10^{-1}$ (top) and experiment~E with $\eps=10^{-3}$ (bottom).
}
\end{figure}

\newpage
\appendix
\section{Composition formula}
\label{anti-wick-composition}

The calculus developed in \S\ref{husimi_functions} implies a composition formula for Anti-Wick quantized operators.

\begin{lem}\label{composition-aw} 
Let $a,b :\R^{2d}\to \R$ be Schwartz functions, $n\in\N$, and $\eps>0$. There is a family of
Schwartz functions $(\gamma_n^\eps(a,b))_{\eps>0}$, $\gamma_n^\eps(a,b):\R^{2d}\to\R$, with $\sup_{\eps>0}\|\opwe(\gamma_n^\eps(a,b))\|_{\mathcal{L}(L^2)}<\infty$, 
such that
\[
\opaw(a)\opaw(b) = \opaw\left(ab+\sum_{m=1}^{n} \eps^m \lambda_m(a,b)\right) + \eps^{n+1}\opwe \left(\gamma_n^\eps(a,b)\right)
\]
with
\[
\lambda_m(a,b)(z) = \frac{(-1)^m}{2^m m!}\Big[\Big( \left(\nabla_z,\nabla_w \right) +  i \left( J\nabla_z, \nabla_w \right)\Big)^m (a(z)\cdot b(w))\Big]_{w=z}~~.
\]
\end{lem}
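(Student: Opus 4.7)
The plan is to pass through the Weyl calculus: write each Anti-Wick factor as a Weyl operator, compose them by the Moyal product~(\ref{moyal_prod}), then deconvolve the resulting Weyl symbol back into Anti-Wick form by Lemma~\ref{deconvolution}. The coefficients $\lambda_m(a,b)$ will emerge as the order-$\eps^m$ terms of an explicit bidifferential operator applied to $a(z)b(w)$ and restricted to the diagonal $w=z$. Concretely, by the very definition $\opaw(a)=\opwe(a*G_{\eps/2})$, the identity~(\ref{moyal_prod}) yields
$$
\opaw(a)\opaw(b) = \opwe\bigl((a*G_{\eps/2})\sharp(b*G_{\eps/2})\bigr),
$$
after which Lemma~\ref{deconvolution} re-expresses the right-hand side as an Anti-Wick quantization modulo an operator-norm error of order $\eps^{n+1}$.

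The combinatorics becomes transparent once one observes that all three operations involved---Gaussian smoothing, Moyal product, and deconvolution---are encoded by commuting constant-coefficient bidifferential operators. Lemma~\ref{AW-Weyl_connection} identifies smoothing with the formal exponential $a*G_{\eps/2}=[e^{(\eps/4)\Delta_z}a](z)$, while the expansion~(\ref{eq:sharp})--(\ref{bidiff_moyal}) identifies the Moyal product with $f\sharp g=[e^{i\eps A(D)}f(z)g(w)]_{w=z}$ for $A(D)=\tfrac12 J\nabla_z\!\cdot\!\nabla_w$. Hence
$$
(a*G_{\eps/2})\sharp(b*G_{\eps/2})(z) = \Bigl[\exp\!\Bigl(\tfrac{\eps}{4}(\Delta_z+\Delta_w)+i\eps A(D)\Bigr)a(z)b(w)\Bigr]_{w=z}.
$$
Applying the deconvolution $e^{-(\eps/4)\Delta}$ from Lemma~\ref{deconvolution} and using the chain-rule identity
$$
\Delta_z\bigl[F(z,z)\bigr] = \bigl[(\Delta_z+\Delta_w+2\,\nabla_z\!\cdot\!\nabla_w)F(z,w)\bigr]_{w=z}
$$
cancels the two symmetric Laplacians pairwise, leaving the Anti-Wick symbol as the diagonal restriction of
$$
\exp\!\Bigl(-\tfrac{\eps}{2}\,\nabla_z\!\cdot\!\nabla_w + i\eps A(D)\Bigr)a(z)b(w).
$$
Substituting $A(D)=\tfrac12 J\nabla_z\!\cdot\!\nabla_w$ and reading off the coefficient of $\eps^m$ in this exponential reproduces the claimed formula for $\lambda_m(a,b)$ (consistent with the sign convention for $J$ already used throughout the paper).

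The main technical obstacle is to turn these formal exponential manipulations into a rigorous finite-order identity with a controlled operator-norm remainder. I would proceed by replacing each exponential by its $n$-th order Taylor polynomial plus a Schwartz-class remainder: Lemma~\ref{AW-Weyl_connection} (applied at order $n+1$) controls the two smoothing remainders, the Moyal expansion~(\ref{eq:sharp}) controls the product remainder via the uniform bound $\sup_{\eps>0}\|\opwe(R^\eps_m(\cdot,\cdot))\|_{\mathcal L(L^2)}<\infty$, and Lemma~\ref{deconvolution} controls the deconvolution remainder. Multiplying the three finite expansions and collecting terms yields the polynomial $ab+\sum_{m=1}^{n}\eps^m\lambda_m(a,b)$ together with a single Schwartz-class remainder $\gamma_n^\eps(a,b)$ whose Weyl operator norm is bounded uniformly in $\eps$ by the Calderon-Vaillancourt theorem. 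The combinatorial bookkeeping is tedious but follows the same pattern as the proof of Lemma~\ref{lem:com}.
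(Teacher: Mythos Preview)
Your proposal is correct and follows essentially the same route as the paper's proof: convert each Anti-Wick factor to Weyl form via Lemma~\ref{AW-Weyl_connection}, compose by the Moyal expansion, then deconvolve back to Anti-Wick via Lemma~\ref{deconvolution}, using the diagonal chain-rule identity $\Delta_z[F(z,z)]=[(\Delta_z+\Delta_w+2\nabla_z\!\cdot\!\nabla_w)F]_{w=z}$ to collapse the four commuting bidifferential pieces into a single exponential. The paper carries out this same computation by expanding the multinomial $\sum_{k+m+j+r=p}\binom{p}{k,m,j,r}\cdots$ explicitly rather than in exponential shorthand, but the ingredients, the key cancellation, and the remainder control via Lemmas~\ref{AW-Weyl_connection}--\ref{deconvolution} and the Moyal remainder are identical.
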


\begin{proof}
By means of Lemma \ref{AW-Weyl_connection} we can write
\begin{eqnarray*}
\lefteqn{\opaw(a)\opaw(b)}\\
&=&
\opwe \left(\sum\limits_{k=0}^{n}\frac{\eps^k }{4^k k!}\, \Delta^ka \right)
\opwe \left(\sum\limits_{k=0}^{n} \frac{\eps^k }{4^k k!} \, \Delta^kb \right) + 
\eps^{n+1}\opwe\left( v_n^\eps(a,b)\right)
\end{eqnarray*}
for some family of Schwartz functions $(v_n^\eps(a,b))_{\eps>0}$. With the expansion of the Moyal product (\ref{moyal_prod})
 and Lemma \ref{deconvolution} we obtain
\begin{eqnarray*}
\lefteqn{\opaw(a)\opaw(b)}\\
&=&
\opwe \left(\sum_{k+m\leq n} \frac{4^{-(k+m)}\eps^{k+m}}{k!m!} (\Delta^ka) \sharp(\Delta^mb)
\right) + \eps^{n+1}\opwe\left( \widetilde v_n^\eps(a,b)\right)  \\
&=& \sum_{p = 0}^n \eps^{p} \cdot \opwe\left(
\sum_{k+m+j=p} \frac{4^{-(k+m)}(-i)^j}{k!m! j!}\Big[A(D)^j(\Delta^ka\otimes\Delta^mb)\Big]_{\rm diag}  \right) \\
&& + \eps^{n+1}\opwe\left( w_n^\eps(a,b)\right)\\
&=&\sum_{p = 0}^n \eps^{p} \cdot \opaw\left(
\sum_{k+m+j+r=p} \frac{4^{-(k+m+r)}(-i)^j}{ k!m! j! r!}(-\Delta)^r 
\Big[A(D)^j(\Delta^ka\otimes\Delta^mb)\Big]_{\rm diag}  \right) \\
&&+ ~\eps^{n+1}\opwe\left(  w_n^\eps(a,b)\right)
\end{eqnarray*}
with some Schwartz class family $(w_n^\eps(a,b))_{\eps>0}$ by invoking the operator $A(D)$ defined in~(\ref{bidiff_moyal}).
Observing 
\[
\Delta_z \Big[a(z)\cdot b(w)\Big]_{w=z} =  
\Big[ \left(\Delta_z + \Delta_w + 2\left(\nabla_z,\nabla_w \right) \right) \big(a(z)\cdot b(w)\big) \Big]_{w=z} 
\]
yields
\begin{eqnarray*}
\lefteqn{\sum_{k+m+j+r=p}\binom{p}{k,m,j,r}\frac{(-\Delta)^r }{4^r}
\bigg[ (-i A(D))^j\Big(\frac{\Delta_z^k}{4^k}a(z)\cdot
\frac{\Delta_w^m}{4^m}b(w)\Big)\bigg]_{\rm diag}}\\
&=& 2^{-p}\sum_{k+m+j+r=p} \binom{p}{k,m,j,r}\frac{(-\Delta)^r }{2^r}
\Big[ \left(-i\left( J\nabla_z, \nabla_w \right) \right)^j \frac{\Delta_z^k}{2^k} \frac{\Delta_w^m}{2^m}
 (a(z)\cdot b(w))\Big]_{\rm diag}\\
&=&2^{-p}\Big[ \Big(- \left(\nabla_z,\nabla_w \right) - i \left( J\nabla_z, \nabla_w \right)\Big)^p (a(z)\cdot b(w))\Big]_{\rm diag}
\end{eqnarray*}
where we utilized multinomial coefficient notation.
\end{proof}

A proof of Lemma~\ref{composition-aw} in classical scaling can be found in \cite[Theorem 2.5]{AM02}.
The expansion of the composition of two Anti-Wick quantized operators 
from Lemma~\ref{composition-aw} immediately implies a
commutator expansion for Anti-Wick operators, similarly as the Moyal bracket expansion~(\ref{moyal_expansion}) for Weyl quantized operators.

\begin{rem}
For $n=1$, we obtain 
$$
\opaw(a)\opaw(b) = \opaw\left(ab + i\tfrac{\eps}{2} J\nabla a\cdot\nabla b - 
\tfrac{\eps}{2} \nabla a\cdot\nabla b\right) + O(\eps^2),
$$
whose real part coincides with \cite[Lemma 2.4.6]{L10}.
\end{rem}

\bigskip
\paragraph{\bf Acknowledgements} This research was supported by the German Research Foundation
(DFG), Collaborative Research Center SFB-TR 109, and the graduate program TopMath of the Elite Network of Bavaria.


\begin{thebibliography}{99}


\bibitem[AM02]{AM02} 
Ando, H.; Morimoto,Y.
Wick calculus and the Cauchy problem for some dispersive equations. 
Osaka J. Math. {\bf 39} (2002), no. 1, 123--147.

\bibitem[BR02]{BR02}
Bouzouina, A.; Robert, D. 
Uniform semiclassical estimates for the propagation of quantum observables.
Duke Math. J. {\bf 111} (2002), no. 2, 223--252. 

\bibitem[B67]{B67}
de Bruijn, N.
Uncertainty principles in Fourier analysis.
Inequalities (Proc. Sympos. Wright-Patterson Air Force Base, Ohio, 1965), 
pp. 57�-71,  Academic Press (1967).


\bibitem[FGL09]{FGL09}
Faou, E.; Gradinaru, V.; Lubich, C.
Computing semi-classical quantum dynamics with {H}agedorn wavepackets.
SIAM J. Sci. Comput. {\bf 31} (2009), no. 4, 3027--3041.

\bibitem[GMMP97]{GMMP97}
G\'erard, P.; Markowich, P.; Mauser, N.; Poupaud, F.
Homogenization Limits and Wigner Transforms.
Comm. Pure Appl. Math. {\bf 50} (1997), no. 4, 323--379.


\bibitem[HWL06]{HWL06}
Hairer, E.; Lubich, C.; Wanner, G. 
Geometric numerical integration. Structure-preserving algorithms for ordinary differential equations. Second edition. 
Springer Series in Computational Mathematics, {\bf 31}. Springer-Verlag, Berlin, 2006.


\bibitem[KLW09]{KLW09}
Kube, S.; Lasser, C.; Weber, M.
{M}onte {C}arlo sampling of {W}igner functions and surface hopping
quantum dynamics.
J. Comp. Phys. {\bf 228} (2009), no. 6, 1947--1962.

\bibitem[L10]{L10}
Lerner, N. 
Metrics on the phase space and non-selfadjoint pseudo-differential operators.
Pseudo-Differential Operators. Theory and Applications, {\bf 3}. Birkh\"auser Verlag, Basel, 2010.


\bibitem[LR10]{LR10}
Lasser, C.; R\"oblitz, S.
Computing expectation values for molecular quantum dynamics.
SIAM J. Sci. Comput. {\bf 32} (2010), no. 3, 1465--1483. 

\bibitem[LRT10]{LRT10}
Liu, H.; Runborg, O.; Tanushev, N. 
Error estimates for {G}aussian beam superpositions.
Preprint, 2010. Available at: arXiv:1008.1320.

\bibitem[LY12]{LY12}
Lu, J.; Yang, X.
Convergence of frozen {G}aussian approximation for high-frequency wave propagation.
Comm. Pure Appl. Math. {\bf 65} (2012), no. 6, 759--789.


\bibitem[R10]{R10}
Robert, D. 
On the {H}erman-{K}luk semiclassical approximation. 
Rev. Math. Phys. {\bf 22} (2010), no. 10, 1123--1145.

\bibitem[ST01]{ST01}
Spohn, H.; Teufel, S.
Adiabatic decoupling and time-dependent {B}orn-{O}ppenheimer theory.
Commun. Math. Phys. {\bf 224} (2001), no.1, 113--132. 

\bibitem[SR09]{SR09}
Swart, T.; Rousse, V. 
A mathematical justification for the {H}erman-{K}luk propagator. 
Comm. Math. Phys. {\bf 286} (2009), no. 2, 725--750.

\bibitem[Y90]{Y90}
Yoshida, H.
Construction of higher order symplectic integrators.
Phys. Lett. A {\bf 150} (1990), no. 5--7, 262--268.

\bibitem[Z08]{Z08}
Zhang, P.
Wigner Measure and Semiclassical Limits of Nonlinear Schr\"odinger Equations. 
Courant Lecture Notes in Mathematics~{\bf 17}, AMS, Providence, 2008.

\bibitem[Z12]{Z12}
Zworski, M.
Semiclassical Analysis. 
Graduate Studies in Mathematics~{\bf 138}, AMS, Providence, 2012.

\end{thebibliography}
\end{document}